
\documentclass[12pt,thmsa, reqno]{amsart}

\usepackage[dvips]{graphics}
\input{amssym.def}
\input{amssym.tex}

\usepackage{lineno}

%\linenumbers

%\def\gnk{G_{n,k}}

\def\Cal{\mathcal}

\def\T{{\Cal T}}

\def\Z{\mathcal{Z}}

\def\stnm{{\rm St}(n,m)}

\def\stnk{{\rm St}(n\!+\!1,n\!+\!1\!-\!k)}

\def\sn{\bbs^n}

\def\bbr{{\Bbb R}}

\def\bbc{{\Bbb C}}

\def\bbs{{\Bbb S}}
\def\bbb{{\Bbb B}}

\def\ker{{\hbox{\rm ker}}}

\def\det{{\hbox{\rm det}}}

\def\rn{\bbr^n}

\def\part{\partial}
\def\intl{\int\limits}
\def\b{\beta}

\def\a{\alpha}
\def\om{\omega}

\def\del{\delta}
\def\vp{\varphi}

\def\gam{\gamma}

\def\sig{\sigma}
\def\lam{\lambda}
\def\z{\zeta}
\def\th{\theta}
\def\e{\varepsilon}
\def\t{\tau}

\def\chi{{\bf 1}}

\def\snm1{\bbs^{n-1}}

%Gothic letters
\font\frak=eufm10

\def\fr#1{\hbox{\frak #1}}

\def\frM{\fr{M}}

%% NEW %%%%%%%%%%
\newtheorem{theorem}{Theorem}[section]
\newtheorem{lemma}[theorem]{Lemma}

\theoremstyle{definition}

\theoremstyle{remark}
\newtheorem{remark}[theorem]{Remark}

\theoremstyle{corollary}

\numberwithin{equation}{section}

%    Absolute value notation

\newcommand{\be}{\begin{equation}}
\newcommand{\ee}{\end{equation}}

\newcommand{\bea}{\begin{eqnarray}}
\newcommand{\eea}{\end{eqnarray}}
\newcommand{\Bea}{\begin{eqnarray*}}
\newcommand{\Eea}{\end{eqnarray*}}

\def\sideremark#1{\ifvmode\leavevmode\fi\vadjust{\vbox to0pt{\vss% the remark
 \hbox to 0pt{\hskip\hsize\hskip1em%                          will appear only
\vbox{\hsize2cm\tiny\raggedright\pretolerance10000%          on the side
 \noindent #1\hfill}\hss}\vbox to8pt{\vfil}\vss}}}%
                                                   %          in 2cm

                                                   %          wide box
                                                   %

%    Blank box placeholder for figures (to avoid requiring any
%    particular graphics capabilities for printing this document).

\begin{document}

\title[Funk-Type Transforms]
{On Two Families of Funk-Type Transforms}

\author{M. Agranovsky and B. Rubin}

\address{Department of Mathematics, Bar Ilan University, Ramat-Gan, 5290002, and Holon Institute of Technology, Holon, 5810201, Israel}
\email{agranovs@math.biu.ac.il }

\address{Department of Mathematics, Louisiana State University, Baton Rouge,
Louisiana 70803, USA}
\email{borisr@math.lsu.edu}

\subjclass[2010]{Primary 44A12; Secondary 37E30}

%\date{August 9, 2019 }

\maketitle

\begin{abstract}
 We consider two families of Funk-type transforms that assign to a function on the unit sphere the integrals of that function over spherical sections by planes of fixed dimension.  Transforms of the first kind are generated by planes passing through a fixed center  outside the sphere. Similar transforms with interior center and with center on the sphere itself we studied in previous publications. Transforms of the second kind, or the parallel slice transforms, correspond to
   planes that are parallel to a fixed direction.   We show that the Funk-type transforms with exterior center express through the parallel slice transforms and the latter are intimately related to the Radon-John d-plane transforms on the Euclidean ball.  These results allow us to investigate  injectivity of our  transforms and obtain  inversion formulas for them.  We also establish connection between the Funk-type transforms  of  different dimensions with  arbitrary center.

 \end{abstract}

\section {Introduction}

The present investigation deals with a pair of integral operators on the unit sphere $\sn$ in $\bbr^{n+1}$ that arise in spherical tomography \cite{GRS} and resemble the classical Funk transform. These operators have the form
\be\label {mnv8} (F_a f)(\t) = \intl_{\sn \cap \,\t} f(x)\, d\sig (x), \qquad (\Pi_{a} f)(\z)=\intl_{\sn \cap \,\z} f(x)\, d\sig (x).\ee
In the first integral, a  function $f$ is integrated over the spherical section $\sn \cap \,\t$
by the $k$-dimensional plane $\t$ passing through a fixed point  $a \in \bbr^{n+1}$, $1< k\le n$. In the second integral, the spherical section  $\sn \cap \,\z$
 is determined by the $k$-plane $\z$, which is parallel to the vector $a\ne 0$.
 We call these operators the {\it shifted Funk transform} and the {\it spherical slice transform}, respectively.
 Our main concern  is  injectivity of the operators (\ref{mnv8}) and inversion formulas.

The case  when $a=0$ is the origin of $\bbr^{n+1}$ corresponds  to the  totally geodesic transform $F_0$, which  goes back to    Minkowski \cite{Min}, Funk  \cite{Fu11, Fu13},
  and Helgason \cite{H90}; see also \cite {GGG1, H11,  Ru02b, Ru15} and references therein. This operator annihilates odd functions and can be  explicitly inverted on even functions.

  The case $|a|=1$, when $F_a$ is injective,   was considered
 by Abouelaz and  Daher \cite{AbD} (on zonal functions)  and  Helgason \cite [p. 145] {H11} (on arbitrary functions) for $n=2$. The general case of all $n\ge 2$ for hyperplane sections was studied by the second co-author
 \cite[Section 7.2]{Ru15}. The method of this work extends to all $1< k\le n$ and inversion formulas can be obtained by making use of the stereographic projection and the inversion results for the Radon-John $d$-plane  transforms.

Operators  $F_a$ with $|a|<1$ were studied  by Salman \cite{Sa16, Sa17} (in a  different setting, caused by support restrictions) and later by  Quellmalz \cite{Q, Q1} and Rubin \cite{Ru18} for $k=n$. The case of  all  $1< k\le n$ was considered by Agranovsky and Rubin \cite {AR}. The kernel of such operators consists of functions $f$ that are odd in a certain  sense, whereas  the corresponding even components  of $f$  can be explicitly reconstructed from $F_af$.

We give credit to  related works by Gardner \cite {Ga}, Gindikin,  Reeds, and   Shepp \cite{GRS}, Kazantsev \cite {Ka},  Palamodov \cite{P1, P2}, and many others, containing information on diverse operators  of integral geometry on the unit sphere and extensive references on this subject.

The present article is focused on the unknown  case $|a|>1$ of the exterior center. We shall prove that,  unlike the case $|a|<1$, when $F_a$ is equivalent, in a certain sense,  to the Funk transform $F_0$, now it  is similarly related to the  parallel slice transform  $\Pi_a$.
 When $a$ belongs to the last coordinate axis, the operator $\Pi_a$  is  known as the {\it vertical slice  transform}. It  was studied
by  Hielscher and  Quellmalz \cite {HQ} (for $n=2$) and Rubin \cite {Ru18a} (for $k=n\ge 2$). In the present article we consider  $\Pi_a$  for all $k\le n$ and all $a\neq 0$.

{\bf Plan of the Paper and Main Results.}  Section 2 contains preliminaries,
 including notation and   auxiliary facts about the  Radon-John $d$-plane transforms.

Section 3 deals with the parallel slice transform $\Pi_a$.   We show that $\Pi_a$ reduces to the Radon-John transform of functions on the  unit ball in the subspace $a^\perp$ perpendicular to $a$. Hence  $\Pi_a$ can be explicitly inverted on the subspace of continuous functions that are even with respect to reflection in $a^\perp$.

Section 4 contains  auxiliary facts about  M\"obius automorphisms $\vp_{a}$   and  $\vp_{a^*}$, $a^*=a/|a|^2$ (see  (\ref{E:g_a})) that  play an important role in the sequel. The definition of these  automorphisms is borrowed from the book by Rudin \cite[Section 2.2.1]{Rud}.

Section 5  contains main results of the paper related to  injectivity and inversion of the shifted Funk transform $F_a$, $|a|>1$,  on continuous functions. The consideration relies on the equality
\be\label {ajhgw} F_a f =  (\Pi_a M_{a^*} f)\circ \vp_{a^*} ,\ee
which links together  $F_a$ and $\Pi_a$ by means of $\vp_{a^*}$ and the  bijection $ M_{a^*}$ having the form
\be\label{vts1de}
(M_{a^*}f)(y)\!=\!\left ( \frac {s_{a^*} }{1\!-\!a^*\cdot  y}\right )^{k-1}\!\!(f\circ \vp_{a^*})(y), \quad s_{a^*}\!=\!\sqrt{1\!-\!|a^*|^2}; \ee
 see  Theorem \ref{ukdndt}.  This  equality is  the core of the paper. It paves the way to the study of injectivity and inversion of $F_a$ (see Theorems \ref{luqw}, \ref{i00s}).
  In the case $|a|<1$, a similar formula, but with the geodesic Funk transform $F_0$ in place of $\Pi_a$, was obtained by Quellmalz \cite {Q} for $n=2$ and generalized  in  \cite{AR, Q1, Ru18} to higher dimensions. In order not to  overload Section 5 with technicalities, we move the proof of (\ref{ajhgw}) to Section 6.

Section 7 can be considered as an  addendum to the previous sections and can be read independently. Here the main result is Theorem \ref{mgvdr} that
 establishes connection between the  shifted Funk transforms $F_a$ associated with
planes of different dimensions. The point $a$ in this theorem is   arbitrary.
   Theorem \ref{mgvdr}   reduces inversion of the  lower dimensional  transforms  to those with  $k=n$. Although the case $k=n$ is less technical, the payment for this simplicity is
   that the inversion procedure contains  an additional  integration step and the degree of the differential operator in the corresponding Radon-John inversion component becomes bigger, specifically,  $n-1$ versus $k-1$. We think that exposition of both methods is, perhaps, instructive.

In our previous work \cite{AR}, we also studied the paired shifted Funk transform $f \to \{F_a f, F_b f\}$ with two  distinct  centers $a$ and $b$ inside the sphere. This approach makes it possible  to circumvent non-injectvity of the single-centered transform $F_a$. Under certain conditions, the results of \cite{AR} extend to {\it arbitrary} two centers in the space, e.g., one center  inside $\sn$ and another outside, or both  are outside. We plan to write a separate paper on  this subject.

\section {Preliminaries}

\subsection {Notation}  In the following,  $\bbr^{n+1}$ is the real $(n+1)$-dimensional space of points $x =(x_1, \ldots, x_{n+1})$; $ e_1, e_2, \ldots, e_{n+1} $ are the coordinate unit vectors; $x\cdot y$ denotes the usual dot product. All points in $\bbr^{n+1}$ are identified with the corresponding column vectors. For the sake of simplicity, we use the same notation  ``$\,0\,$'' for the zero vector or a number and for the origin of the corresponding coordinate space. Precise meaning of this notation is  clear from the context.

We write $\bbb^{n+1}=\{x \in \bbr^{n+1}: |x|<1\}$ for  the open unit ball in $\bbr^{n+1}$; $\sn$ is the boundary of $\bbb^{n+1}$.  If $x\in  \sn$, then $dx$ stands for the  Riemannian  measure on $\sn$.  The notation $d\sig (x)$ is used for the lower-dimensional surface area measure on subspheres of $\sn$.
 Given a point $a\in \bbr^{n+1}\setminus \{0\}$, let $a^*=a/|a|^2$  be  the inversion of $a$ with respect to $\sn$, $\tilde a=a/|a| \in \sn$,   and $a^\perp$ be  the linear subspace orthogonal to the  vector $a$.

We set $\frM_{n,m}$ to be the space of all real matrices having $n$ rows and $m$  columns;
  $\mathrm{M}'$ denotes the transpose of the matrix $\mathrm{M}$; $\mathrm{I}_m$ is the identity $m \times m$   matrix.  For $n\geq m$,  $\stnm= \{\mathrm{M} \in \frM_{n,m}: \mathrm{M}'\mathrm{M}=\mathrm{I}_m \}$ is the Stiefel manifold of orthonormal $m$-frames in $\bbr^n$. Given $a \in \bbr^{n+1} \setminus \{0\}$, we write  ${\rm St} (a^\perp, m)$ for  the subset of  ${\rm St} (n+1, m)$ formed by $m$-frames in $a^\perp$.

 We denote by
$\T_a (n+1,k)$    a  family of all  $k$-dimensional affine planes $\t$ in $\bbr^{n+1}$ passing through the point  $a$ and intersecting the  ball  $\bbb^{n+1}$.  If $a\ne 0$, the notation $\Z_a (n+1,k)$ is used for the set of all  $k$-dimensional affine planes $\z$ that are parallel to  the  vector $a$ and meet  $\bbb^{n+1}$.  We also set
 \be\label {mvvcy} \mathrm {P}_a  x = \frac {a\cdot x}{|a|^2}\,a, \qquad   \mathrm {Q}_a=\mathrm {I}_{n+1} -\mathrm {P}_a,\qquad a\ne 0,\ee
for the corresponding projection maps.

The standard notation $C(\sn)$ and $L^p (\sn)$ is used for the  corresponding spaces of continuous and $L^p$ functions on  $\sn$.

\subsection {The Radon-John  Transform}\label {ppmjk}

Let $d$ and $n$ be  positive integers, $d< n$.
We denote by  ${\rm Gr}(n, d)$   the
 set of all $d$-dimensional affine planes  in
$\rn$; ${\rm Gr}_0 (n, d)$ is the  Grassmann manifold
 of  $d$-dimensional  linear subspaces  of $\rn$ .
 Each $d$-plane $\tau$  is parameterized as
$\t=\t(\t_0, u)$, where $\t_0 \in {\rm Gr}_0 (n, d)$ and $ u \in \t_{0}^\perp$, the orthogonal complement of $\t_0$ in $\rn$.
 The  Radon-John $d$-plane transform
 of a function $\vp$ on $\bbr^n$  is defined
 by   \be \label{vt3}(R\vp)(\tau) \equiv (R\vp)(\t_0, u) = \intl_{\displaystyle{\t_0}} \vp(u+v)\,
dv\ee
 provided  this integral is meaningful; see, e.g., \cite{GGG1, H11, Mar, Ru04b}.

 A great deal of  inversion formulas for  this  transform  is known. We recall  one of them, following \cite{Ru13b}.
  Consider the Erd\'elyi-Kober type fractional integral
\[(I^\a_{-, 2} f)(t)\! =\! \frac{2}{\Gamma (\alpha)} \intl_t^{\infty}\!
\frac{f(s) \, s\,ds} {(s^2-t^2)^{1- \alpha}}, \qquad \a>0. \]
  The relevant fractional derivative of order $d/2$ can be defined by
\[\Cal D^{d/2}_{-, 2} f = t^{2-d+2m}
(- D)^{m +1} t^{d} g, \quad g=I^{1-d/2+m}_{-,2} \,t^{-2m-2}\,f, \quad D=\displaystyle{\frac {1}{2t}\,\frac {d}{dt}},\]
where  $m=[d/2]$ is the integer part of $d/2$  and the powers of $t$  stand for the corresponding
 multiplication operators. If $d$ is even, this  formula can be replaced by the simple one:
\[
\Cal D^{d/2}_{-, 2} f=(- D)^{d/2} f.\]
Given  a function $\Phi$ on $ {\rm Gr}(n, d)$ and a point $x\in \rn$, we set
 \[(R^*_x \Phi)(t)= \intl_{SO(n)} \!  \Phi (\gam \t_0 +x + t\gam e_n) \,
 d\gam,  \qquad t>0,\]
 which is the mean value of $\Phi$ over all $d$-planes at distance $t$ from $x$. Here $\t_0$ is an arbirary $d$-plane passing through the origin.

 \begin{theorem}\label{invr1p}  {\rm (cf. \cite[Theorem 3.5]{Ru13b})}
A function $\vp \in L^p (\bbr^n)$,  $1\le p<n/d$, can be recovered from $\Phi=R\vp$   by the  formula
\be\label{nnxxzz}\vp(x) \equiv (R^{-1} \Phi)(x)=  \lim\limits_{t\to 0}\, \pi^{-d/2} (\Cal D^{d/2}_{-, 2} R^*_x \Phi)(t),\ee
where the  limit  is understood in the $L^p$-norm.
\end{theorem}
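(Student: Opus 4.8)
The plan is to reduce the composition $R^*_x R$ to a one–dimensional Erd\'elyi--Kober fractional integral acting on the spherical means of $\vp$, and then to undo that fractional integration by means of the operator $\Cal D^{d/2}_{-, 2}$. Denote by
\be\label{sphmean}(\M_s\vp)(x)=\intl_{SO(n)}\vp(x+s\gam e_n)\,d\gam\ee
the normalized spherical mean of $\vp$ over the sphere of radius $s$ centered at $x$ ($d\gam$ being normalized Haar measure), and write $g_x(s)=(\M_s\vp)(x)$. Since $\vp\in L^p(\bbr^n)$ with $p<n/d$, the relevant integrals converge absolutely, which legitimizes the use of Fubini's theorem below.

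First I would carry out the purely geometric computation. Fix $\t_0=\span\{e_1,\dots,e_d\}$, so that $e_n\in\t_0^\perp$ and the plane $\gam\t_0+x+t\gam e_n$ lies at distance $t$ from $x$. Substituting $\Phi=R\vp$ into the definition of $R^*_x$ and writing a point of the subspace $\gam\t_0$ as $\gam v$ with $v\in\t_0$, rotation invariance of Lebesgue measure on $\t_0$ gives
\be\label{comp1}(R^*_x R\vp)(t)=\intl_{SO(n)}\intl_{\t_0}\vp\big(x+\gam(te_n+v)\big)\,dv\,d\gam.\ee
Interchanging the order of integration and performing the inner average over $SO(n)$ turns the integrand into the spherical mean at radius $|te_n+v|=\sqrt{t^2+|v|^2}$, so that $(R^*_x R\vp)(t)=\intl_{\t_0}g_x\big(\sqrt{t^2+|v|^2}\big)\,dv$. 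Passing to polar coordinates in $\t_0\cong\bbr^d$ and setting $s=\sqrt{t^2+r^2}$ with $r=|v|$, one finds $r^{d-1}dr=(s^2-t^2)^{d/2-1}s\,ds$, whence
\be\label{comp3}(R^*_x R\vp)(t)=|\bbs^{d-1}|\intl_t^\infty g_x(s)\,(s^2-t^2)^{d/2-1}s\,ds=\pi^{d/2}\,(I^{d/2}_{-, 2}g_x)(t),\ee
where I used $|\bbs^{d-1}|=2\pi^{d/2}/\Gam(d/2)$ together with the definition of $I^{\a}_{-, 2}$ for $\a=d/2$. Thus $R^*_x R\vp$ is, up to the factor $\pi^{d/2}$, the Erd\'elyi--Kober integral of order $d/2$ of the radial profile $g_x$.

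Finally, since $\Cal D^{d/2}_{-, 2}$ is by construction a left inverse of $I^{d/2}_{-, 2}$ on the appropriate class of functions (this is exactly the inversion identity underlying the two explicit formulas given for $\Cal D^{d/2}_{-, 2}$ in the even and odd cases), applying $\Cal D^{d/2}_{-, 2}$ to (\ref{comp3}) collapses the fractional integral and yields
\be\label{comp4}\pi^{-d/2}(\Cal D^{d/2}_{-, 2}R^*_x R\vp)(t)=g_x(t)=(\M_t\vp)(x).\ee
Letting $t\to0$ and using that the spherical means form an approximate identity, i.e.\ $\M_t\vp\to\vp$ in $L^p(\bbr^n)$ as $t\to0^+$, gives the asserted formula (\ref{nnxxzz}).

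The main obstacle is the rigorous justification of the fractional-calculus step (\ref{comp4}): one must verify that $g_x$ lies in the domain on which $\Cal D^{d/2}_{-, 2}I^{d/2}_{-, 2}=\mathrm{Id}$ holds, control the behaviour of the integrals at infinity, and show that the differentiation and the limit $t\to0$ may be taken in the $L^p$-norm rather than pointwise. This is precisely where the hypothesis $p<n/d$ enters: it guarantees that the kernel $(s^2-t^2)^{d/2-1}s$ is paired with an $L^p$ profile without divergence, equivalently that the Riesz-type potential of order $d$ produced by $R^*R$ (the value of (\ref{comp3}) at $t=0$) is finite. The geometric reduction (\ref{comp1})--(\ref{comp3}) is essentially mechanical once Fubini is licensed; the analytic heart of the argument is the $L^p$-convergence in (\ref{comp4}), handled through the mapping properties of the Erd\'elyi--Kober operators and of the spherical-mean approximate identity.
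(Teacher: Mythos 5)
The paper does not prove this theorem at all: it is quoted verbatim from the reference \cite{Ru13b} (Theorem 3.5 there), so there is no in-paper proof to compare against. Your argument is correct and is in fact the standard proof of that cited result, namely the Funk--Radon--Helgason mean-value method: the identity $(R^*_xR\vp)(t)=\pi^{d/2}(I^{d/2}_{-,2}g_x)(t)$ with $g_x(s)=(\M_s\vp)(x)$ is exactly the right reduction (your polar-coordinate bookkeeping and the constant $|\bbs^{d-1}|=2\pi^{d/2}/\Gamma(d/2)$ check out), and the conclusion follows by applying the left inverse $\Cal D^{d/2}_{-,2}$ of $I^{d/2}_{-,2}$ and letting $t\to 0$. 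You have also correctly isolated the only nontrivial analytic work --- justifying that $\Cal D^{d/2}_{-,2}I^{d/2}_{-,2}=\mathrm{Id}$ on the profiles $g_x$ and that all limits hold in $L^p$, which is where $1\le p<n/d$ (finiteness a.e.\ of $R\vp$ and of the Riesz-type potential $R^*R\vp$) is used; for the full details of that step one must still consult \cite{Ru13b}.
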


More inversion formulas in different classes of functions can be found in  \cite{H11,  Ru04b}.

\section {Parallel Slice Transforms}\label {ppmj}

We recall that given a point $a \in \bbr^{n+1}$, $|a|>1$, and an integer $k$, $1<k \le n$, the notation     $\Z_a (n+1,k)$ stands for the set of all  $k$-dimensional affine planes in $\bbr^{n+1}$ that are parallel to the vector $a$ and meet the open ball $\bbb^{n+1}$. Let
$\Z_a^0 (n+1,k)$ be the subset of  $\Z_a (n+1,k)$ that consists of all planes containing both $a$ and the origin.
 Every plane $\z \in \Z_a (n+1,k)$
 has the form
\be\label{amunl} \z\equiv \z(\z_0,u)=\z_0 +u,\ee
where \[ \z_0 \in \Z_a^0 (n+1,k), \qquad u \in \z_0^\perp \cap a^\perp, \qquad |u|<1.\]
Consider  the   parallel slice transform
  \be\label{munl}
(\Pi_{a} f)(\z)=\intl_{\sn \cap \,\z} f(x)\, d\sig (x),\qquad \z \in \Z_a (n+1,k),\ee
assuming $f\in C(\sn)$. This operator  annihilates all functions which are odd with respect to the subspace $a^\perp$.  Specifically,
let
 \be\label {lili1}
\mathrm {R}_a x = x-2\, \frac{x\cdot a}{|a|^2}\, a\ee
be  the reflection (or inversion) mapping in the hyperplane $a^\perp$. We set
\be\label {lili2}
C^{\pm}_a (\sn)= \left\{ f\in C(\sn): f(x)=\pm f (\mathrm {R}_a x)\right \},\ee
\be\label {lili2a}
f^+_a= \frac{f+ f\circ \mathrm {R}_a}{2}, \qquad f^-_a= \frac{f- f\circ \mathrm {R}_a}{2}.\ee
Then for  $ f\in C_a^-(\sn)$ we have $\Pi_{a} f =0$. In other words,   $\Pi_{a} f =0$ whenever $f^+_a=0$.
As we shall see below,
 the last equality characterizes the kernel of the operator $\Pi_{a}$. To prove this fact, we establish  connection between $\Pi_{a}$ and the  Radon-John transform $R$. This connection also yields a series of  inversion formulas for $\Pi_{a}$.

\begin{lemma} \label{lem2.1} Let $\z\equiv \z(\z_0, u) \in \Z_a (n+1,k)$. Then
\be\label {awlili2a}
(\Pi_{a} f)(\z)= (1-|u|^2)^{1/2} (R\vp)(a^\perp \cap \z),\ee
where $\vp$ is a function on the ball $\{y\in a^\perp: |y|<1\}$ defined by
\be\label {awlili2as} \vp(y)=2(1-|y|^2)^{-1/2} f(y+ \sqrt{1-|y|^2} \, \tilde a), \qquad \tilde a =\frac{a}{|a|},\ee
and $R$ is the Radon-John transform over  $(k-1)$-planes in  $a^\perp$.
\end{lemma}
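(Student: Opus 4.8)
The plan is to linearize the spherical integral defining $\Pi_a$ by projecting the section $\sn\cap\z$ orthogonally onto $a^\perp$ along the direction $\tilde a$, thereby turning it into a Euclidean integral over the $(k-1)$-plane $a^\perp\cap\z$ that is exactly a Radon--John transform.

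First I would rotate coordinates so that $\tilde a=e_{n+1}$; then $a^\perp=\bbr^n$, and the reflection $\mathrm R_a$ of (\ref{lili1}) merely changes the sign of the last coordinate. Writing $\z=\z_0+u$ as in (\ref{amunl}), the linear part $\z_0$ contains $\tilde a$, so $\z_0\cap a^\perp$ is a $(k-1)$-dimensional subspace, $a^\perp\cap\z=(\z_0\cap a^\perp)+u$ is the associated $(k-1)$-plane in $a^\perp$, and $u$ is the foot of its perpendicular from the origin; thus $(\z_0\cap a^\perp,u)$ is precisely the parameter of this plane in the sense of (\ref{vt3}). The projection $\mathrm Q_a$ of (\ref{mvvcy}) maps $\sn\cap\z$ onto the disc $D=\{w\in a^\perp\cap\z:\ |w|<1\}$, and since $\z$ is parallel to $\tilde a$ this map is two-to-one, with fibre over $w$ equal to the pair $x_\pm(w)=w\pm\sqrt{1-|w|^2}\,\tilde a$ that is interchanged by $\mathrm R_a$.

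The key step is the Gram determinant of the graph map $w\mapsto x_\pm(w)$. Choosing orthonormal coordinates $t=(t_1,\dots,t_{k-1})$ on $\z_0\cap a^\perp$ with basis $v_1,\dots,v_{k-1}$, so that $x_+=u+\sum_i t_iv_i+c\,\tilde a$ with $c=\sqrt{1-|u|^2-|t|^2}=\sqrt{1-|w|^2}$, I find $\partial_i x_+=v_i-(t_i/c)\,\tilde a$ and hence Gram matrix $\mathrm I_{k-1}+c^{-2}tt'$, whose determinant is $(1-|u|^2)/(1-|w|^2)$. The same value arises for the lower sheet, so the induced area element on either sheet is $d\sig=\sqrt{1-|u|^2}\,(1-|w|^2)^{-1/2}\,dw$, where $dw$ is Lebesgue measure on $a^\perp\cap\z$. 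This computation, together with the observation that the singular factor $(1-|w|^2)^{-1/2}$ is integrable up to $\partial D$, is the main technical point; the rest is bookkeeping.

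Finally I would sum the two sheets. Using $x_-(w)=\mathrm R_a x_+(w)$ and $f(x_+)+f(x_-)=2f^+_a(x_+)$, the change of variables gives
\be\label{plan-sum}(\Pi_a f)(\z)=\sqrt{1-|u|^2}\,\intl_{D}\frac{2\,f^+_a(x_+(w))}{\sqrt{1-|w|^2}}\,dw,\ee
which already exhibits the fact that $\Pi_a$ annihilates $C^-_a(\sn)$. Since $\Pi_a f=\Pi_a f^+_a$, it suffices to prove (\ref{awlili2a}) for $f\in C^+_a(\sn)$, where $f=f^+_a$ and the integrand in (\ref{plan-sum}) equals $\vp(w)$ with $\vp$ as in (\ref{awlili2as}). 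Extending $\vp$ by zero outside $D$ and reading $\intl_D\vp\,dw$ as the integral over the entire $(k-1)$-plane $a^\perp\cap\z$---legitimate because $\vp$ is supported in the unit ball of $a^\perp$---this integral is $(R\vp)(a^\perp\cap\z)$ by (\ref{vt3}), giving (\ref{awlili2a}). I expect the genuinely delicate points to be the orientation-free two-to-one projection and the passage from $f(x_+)+f(x_-)$ to $2f^+_a$, which is exactly what legitimizes writing the single value $f(x_+)$ in the definition of $\vp$.
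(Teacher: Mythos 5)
Your proposal is correct and follows essentially the same route as the paper: both proofs project the spherical section $\sn\cap\z$ orthogonally onto the $(k-1)$-plane $a^\perp\cap\z$ and arrive at the same area Jacobian $\sqrt{1-|u|^2}/\sqrt{1-|w|^2}$ (the paper via the standard graph-over-the-equatorial-disc formula after a preliminary rotation, you via a direct Gram-determinant computation, which is the same calculation). If anything, your explicit two-sheet bookkeeping through $f^+_a$ is slightly more careful than the paper's, which inserts the factor $2f(\cdot)$ at once; as you observe, the stated identity with $f$ rather than $f^+_a$ in $\vp$ is literally valid only on $C^+_a(\sn)$, which is the only case the paper subsequently uses.
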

\begin{proof}  To simplify the geometry of our consideration, let
\[\bbr^k=\bbr e_1 \oplus \cdots \oplus \bbr e_{k-1} \oplus \bbr e_{n+1}=\bbr^{k-1} \oplus \bbr e_{n+1}\] and choose $\gam \in O(n+1)$ so that
\[ \gam e_{n+1} =\tilde a, \quad \gam \bbr^k =\z_o, \quad \gam e_k=u/|u|.\]
We set
\[t=|u|, \qquad x=\gam (\xi +te_k), \qquad \xi \in \sn \cap \bbr^k.\]
The new variable $\xi$ ranges on the  sphere $\bbs^{k-1}_* $ of radius $\sqrt{1-t^2}$, centered at the origin, and lying
in the coordinate plane $\bbr^k$. Thus we have
\[
(\Pi_{a} f)(\z)= \intl_{\bbs^{k-1}_*} f(\gam (\xi +te_k)) \, d\sig (\xi).\]
This integral can be computed in a standard way, using projection $\eta$ of $\xi$ onto the  coordinate plane  $\bbr^{k-1}$. This gives
\bea
&&(\Pi_{a} f)(\z)= 2\intl_{|\eta|< \sqrt{1-t^2}}  f(\gam (\eta +te_k+ \sqrt {1-t^2 -|\eta|^2}\,e_{n+1})) \nonumber\\
&&\times\sqrt{1\!+\!\left (\frac{\partial \eta_{n+1}}{\partial \eta_1}\right )^2\!+\!\ldots \!+\!\left (\frac{\partial \eta_{n+1}}{\partial \eta_{k-1}}\right )^2} \,d\eta\nonumber\\
&&= 2\sqrt{1-t^2}\!\intl_{|\eta|< \sqrt{1-t^2}} \!\! f(\gam (\eta \!+\!te_k\!+ \!\sqrt {1\!-\!t^2 \! -\!|\eta|^2}\,e_{n+1}))\, \frac{d\eta}{\sqrt {1\!-\!t^2\! -\!|\eta|^2}}. \nonumber\eea
Denoting the argument of $f$ by $y$ and returning to the original notation, we obtain
\[
(\Pi_{a} f)(\z)\equiv (\Pi_{a} f)(\z_0,u) = 2\sqrt{1-|u|^2}\intl_{\t}
\frac{ f(y+ \sqrt{1-|y|^2} \, \tilde a)}{\sqrt{1-|y|^2}}\, dy,\]
where $\t =\{y\in  a^\perp \cap \z:  \, |y|<1,  {\rm P}_u y=u \}$,  ${\rm P}_u y$ being the orthogonal projection of $y$ onto the direction of $u$. Now the result follows.
\end{proof}

\begin{theorem} \label {jsbsa78}
The  operator $\Pi_{a}$ is injective on the subspace $C^+_a (\sn)$. Moreover,
\be\label {lili3}
\ker (\Pi_{a}) =C^{-}_a (\sn)=\{ f\in C(\sn): f^+_a=0\} .\ee
\end{theorem}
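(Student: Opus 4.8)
The plan is to split $f$ into its $\mathrm R_a$-even and $\mathrm R_a$-odd parts and to reduce the even case to the injectivity of the Radon--John transform supplied by Lemma \ref{lem2.1}.

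I would first record that $C^-_a(\sn)\subseteq\ker(\Pi_a)$. Since each $\z\in\Z_a(n+1,k)$ is parallel to $a$, the reflection $\mathrm R_a$ maps $\z$ onto itself and restricts to an isometry of the subsphere $\sn\cap\z$; hence the substitution $x\mapsto\mathrm R_a x$ in (\ref{munl}) turns $(\Pi_a f)(\z)$ into $(\Pi_a (f\circ\mathrm R_a))(\z)$. For $f\in C^-_a(\sn)$ this equals $-(\Pi_a f)(\z)$, forcing $\Pi_a f=0$. By linearity $\Pi_a f=\Pi_a f^+_a$ for every $f\in C(\sn)$, so the statement reduces to injectivity of $\Pi_a$ on $C^+_a(\sn)$.

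To prove that injectivity, suppose $f\in C^+_a(\sn)$ and $\Pi_a f=0$, and let $\vp$ be the function on the unit ball of $a^\perp$ attached to $f$ by (\ref{awlili2as}). Lemma \ref{lem2.1} gives $(\Pi_a f)(\z)=(1-|u|^2)^{1/2}(R\vp)(a^\perp\cap\z)$, where the scalar factor is positive whenever $\z$ meets $\bbb^{n+1}$, i.e. whenever $|u|<1$; therefore $(R\vp)(a^\perp\cap\z)=0$ for all such $\z$. As $\z$ ranges over $\Z_a(n+1,k)$, its trace $a^\perp\cap\z$ ranges over all affine $(k-1)$-planes in $a^\perp\cong\bbr^n$ meeting the open unit ball, and since $\vp$ is supported in that ball, $R\vp$ vanishes on every $(k-1)$-plane. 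Injectivity of the Radon--John transform (Theorem \ref{invr1p} with $d=k-1$) then yields $\vp\equiv0$, whence $f(y+\sqrt{1-|y|^2}\,\tilde a)=0$ throughout the open ball. Thus $f$ vanishes on the open hemisphere $\{x\in\sn:\,x\cdot\tilde a>0\}$, by evenness on the opposite one, and by continuity on all of $\sn$. Together with the first step this gives $\ker(\Pi_a)=C^-_a(\sn)=\{f:f^+_a=0\}$.

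The one step that is not purely formal is matching $\vp$ to the hypotheses of Theorem \ref{invr1p}, which require $\vp\in L^p(\bbr^n)$ with $1\le p<n/(k-1)$. The weight $(1-|y|^2)^{-1/2}$ in (\ref{awlili2as}) creates a boundary singularity that is $p$-integrable over the ball exactly for $p<2$; since $k\le n$ gives $n/(k-1)\ge n/(n-1)>1$, the range $1\le p<\min(2,n/(k-1))$ is nonempty and, $f$ being bounded, $\vp$ lies in the corresponding $L^p$. Verifying this integrability, and confirming that the traces $a^\perp\cap\z$ indeed exhaust the $(k-1)$-planes meeting the ball, are the only technical points; everything else is bookkeeping built on Lemma \ref{lem2.1} and Theorem \ref{invr1p}.
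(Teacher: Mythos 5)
Your proof is correct and follows essentially the same route as the paper's: split $f$ into its $\mathrm{R}_a$-even and $\mathrm{R}_a$-odd parts, reduce $\Pi_a$ on the even part to the Radon--John transform via Lemma \ref{lem2.1}, and invoke Theorem \ref{invr1p} to conclude $\vp\equiv 0$ and hence $f^+_a=0$. You merely make explicit several details the paper leaves implicit (the isometry argument for $C^-_a(\sn)\subset\ker(\Pi_a)$, the $L^p$ membership of $\vp$, and the exhaustion of the $(k-1)$-planes by the traces $a^\perp\cap\z$), so no further comparison is needed.
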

\begin{proof} The embedding $C^{-}_a (\sn) \subset \ker (\Pi_{a})$ is obvious. Conversely,
 if $\Pi_{a} f=0$ for some $f\in C(\sn)$, then $\Pi_{a} f^+_a = \Pi_{a} (f - f^-_a)=0-0=0$, and therefore, by Lemma \ref{lem2.1}, the corresponding function $\vp$ in (\ref{awlili2as}) is zero.  The latter means that $f^+_a  =0$.
 It remains to note that the injectivity of
$\Pi_{a}$  on  $C^+_a (\sn)$ follows from Theorem \ref{invr1p} for the Radon-John transform when the latter acts on  $(k-1)$-planes in  $a^\perp \sim \bbr^n$.
\end{proof}

The inversion formula for  $\Pi_{a} f$ on functions $f \in C^+_a (\sn)$ follows from the corresponding result for the Radon-John transform.   We define the operator $\Pi_{a}^{-1}: \Pi_{a}(C(\sn)) \to C_a^{+}(\sn)$ by setting
\begin{equation}\label{E:F-1nh}
 \Pi_{a}^{-1}g=f_a^+,  \qquad  g=\Pi_{a}f \in \Pi_{a}(C(\sn)).
\end{equation}
 This definition does not depend on the choice of the representative $f$ in  $g=\Pi_{a}f$. Indeed, if  $g=\Pi_{a}f_1=\Pi_{a}f_2$, then $f_1-f_2 \in \ker (\Pi_{a})$,  and therefore
\bea 0&=&\Pi_{a}f_1 -\Pi_{a}f_2 =\Pi_{a}[(f_1)_a^+ - (f_2)_a^+] + \Pi_{a}[(f_1)_a^- - (f_2)_a^-]\nonumber\\
&=&\Pi_{a}[(f_1)_a^+ - (f_2)_a^+].\nonumber\eea
Because the restriction  of $\Pi_{a}$ onto  $C_a^{+}(\sn)$ is injective, it follows that $ (f_1)_a^+ =(f_2)_a^+$.

If $g \in \Pi_{a}(C(\sn))$, that is, $g=\Pi_{a}f$ for some $f \in C(\sn)$, then
\[
\Pi_{a}(\Pi_{a}^{-1}g)=\Pi_{a}f_a ^+ = \Pi_{a}f=g,\] so that $\Pi_{a}^{-1}$ is the right inverse of the operator $\Pi_{a}$,  when the latter  acts on $C(\sn)$. However,  for functions $f\in C_a^{+}(\sn)$ we have $\Pi_{a}^{-1} \Pi_{a} f=f$, which means that $\Pi_{a}^{-1}$ can also be considered as the  left inverse of the restriction of $\Pi_{a}$ onto $C_a^{+}(\sn)$. In the last setting, an explicit expression for $\Pi_{a}^{-1}$ can be obtained by making use of the following theorem.

\begin {theorem} \label {lkuyr} Let $1 \!<k \le n$, $a \!\in \!\bbb^{n+1}\!\setminus \{0\}$.
A function $f \in C^+_a (\sn)$  can be reconstructed from $g(\z)=(\Pi_{a} f)(\z)$ by the formula
\be\label{xcmnvb}
f(x) \equiv (\Pi_{a}^{-1} g)(x)=\frac{1}{2} \,|x\cdot \tilde a| \,(R^{-1} \Phi) ({\rm Q}_{a} x), \qquad \tilde a =\frac{a}{|a|},\ee
where  ${\rm Q}_{a} x$ is  the orthogonal projection of $x$ onto $a^\perp$,
\be\label{xcmnvb1}
\Phi (\t)\equiv \Phi (\t_0, u)= (1-|u|^2)^{-1/2} g ((\t_0 \oplus \bbr \tilde a) +u),   \ee
\[ u\in \t_0^\perp\cap \,\bbb^{n+1},\]
 $R^{-1} \Phi$ is the inverse Radon-John transform of $\Phi$ on $a^\perp$ (cf.   (\ref{nnxxzz})).
\end{theorem}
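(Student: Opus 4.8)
The plan is to assemble the inversion formula \eqref{xcmnvb} by inverting, in the correct order, the two elementary operations that Lemma \ref{lem2.1} uses to express $\Pi_a$ through the Radon-John transform $R$ on $a^\perp$. The lemma gives $(\Pi_a f)(\z)=(1-|u|^2)^{1/2}(R\vp)(a^\perp\cap\z)$, where $\vp$ lives on the unit ball of $a^\perp$ and is obtained from $f$ by the substitution \eqref{awlili2as}. So there are exactly two things to undo: first strip off the factor $(1-|u|^2)^{1/2}$ to recover $R\vp$ from $g=\Pi_a f$, and then invert $R$ and reverse the change of variables \eqref{awlili2as} to recover $f$ from $\vp$. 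I would present these two reductions as the two halves of the argument.

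First I would make the parametrizations match. A plane $\z\in\Z_a(n+1,k)$ corresponds, via intersection with $a^\perp$, to a $(k-1)$-plane $\t=a^\perp\cap\z$ in $a^\perp\cong\bbr^n$, written $\t=\t(\t_0,u)$ with $\t_0\in{\rm Gr}_0(n,k-1)$ a subspace of $a^\perp$ and $u\in\t_0^\perp\cap a^\perp$; the original $\z$ is then recovered as $\z=(\t_0\oplus\bbr\tilde a)+u$. Under this identification the factor $1-|u|^2$ in Lemma \ref{lem2.1} is exactly the $1-|u|^2$ appearing in \eqref{xcmnvb1}, so the definition of $\Phi$ is precisely the normalization that turns $g$ back into $R\vp$: from \eqref{awlili2a} we get $(R\vp)(\t)=(1-|u|^2)^{-1/2}g(\z)=\Phi(\t)$. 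This is the content of \eqref{xcmnvb1} and requires only bookkeeping, no analysis.

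Next, since $f\in C^+_a(\sn)$ the associated function $\vp$ on the ball of $a^\perp$ is continuous and compactly supported, hence lies in every $L^p$, so Theorem \ref{invr1p} applies to $R$ acting on $(k-1)$-planes in $a^\perp\cong\bbr^n$ (here $d=k-1<n$), giving $\vp=R^{-1}\Phi$. It remains to invert the substitution \eqref{awlili2as}. Solving $\vp(y)=2(1-|y|^2)^{-1/2}f(y+\sqrt{1-|y|^2}\,\tilde a)$ for $f$ at a point $x\in\sn$ of the upper hemisphere ($x\cdot\tilde a>0$): write $y={\rm Q}_a x$, the projection of $x$ onto $a^\perp$, so that $|y|^2=1-(x\cdot\tilde a)^2$, giving $\sqrt{1-|y|^2}=|x\cdot\tilde a|$ and $x=y+|x\cdot\tilde a|\,\tilde a$. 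Hence $f(x)=\tfrac12|x\cdot\tilde a|\,\vp({\rm Q}_a x)=\tfrac12|x\cdot\tilde a|\,(R^{-1}\Phi)({\rm Q}_a x)$, which is \eqref{xcmnvb}. For $x$ in the lower hemisphere the same formula holds because $f\in C^+_a(\sn)$ is even under $\mathrm R_a$ and $|x\cdot\tilde a|$, ${\rm Q}_a x$ are both $\mathrm R_a$-invariant, so the two hemispheres give consistent values; on the equator $x\cdot\tilde a=0$ both sides vanish by continuity.

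The only genuine obstacle I anticipate is a careful justification that Theorem \ref{invr1p} is legitimately applicable, namely that $\vp$ defined by \eqref{awlili2as} has enough regularity and decay to be recovered by the limit formula \eqref{nnxxzz}. The factor $(1-|y|^2)^{-1/2}$ is singular as $|y|\to1$, so $\vp$ is continuous on the open ball but blows up at the boundary sphere; I would need to check it still belongs to $L^p(a^\perp)$ for some $1\le p<n/(k-1)$ so that the hypothesis of Theorem \ref{invr1p} is met. Everything else—matching parametrizations, the algebraic inversion of the weight, and the geometric inversion of the map \eqref{awlili2as}—is routine once the even symmetry is invoked to make the formula well defined on the whole sphere.
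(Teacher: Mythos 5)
Your proposal is correct and follows essentially the same route as the paper: reduce to the upper hemisphere via the $C^+_a(\sn)$ symmetry, use Lemma \ref{lem2.1} to identify $\Phi$ with $R\vp$, invert $R$ by Theorem \ref{invr1p}, and undo the substitution \eqref{awlili2as} via $\sqrt{1-|y|^2}=x\cdot\tilde a$, $y={\rm Q}_a x$. The one check you leave open is immediate: take $p=1$, since $(1-|y|^2)^{-1/2}$ is integrable over the unit ball of $a^\perp\cong\bbr^n$ and $1<n/(k-1)$ because $k-1<n$, which is exactly the $L^1$ assertion the paper makes.
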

\begin {proof}  Let $\sn_{a+}= \{x \in \sn: x \cdot a\ge 0\}$.
Because $f \in C^+_a (\sn)$, it suffices to prove (\ref{xcmnvb}) for $x\in \sn_{a+}$.
By Lemma \ref{lem2.1},  the function $\Phi$  in (\ref{xcmnvb1}) is represented by the Radon-John transform $R\vp$ with
\be\label{xcmnvb2}
\vp(y)=2(1-|y|^2)^{-1/2} f(y+ \sqrt{1-|y|^2} \, \tilde a).\ee
The latter belongs to $L^1$ on the ball $\{y\in a^\perp: |y|<1\}$ and is continuous inside this ball. Now, because $(1-|y|^2)^{1/2}=x\cdot \tilde a$, the result follows.
\end{proof}

\section {Basic  Automorphisms of $\sn$}

 Given a point $a\in \bbr^{n+1}\setminus \{0\}$, we define the  projection maps
 \[\mathrm {P}_a  x = \frac {a\cdot x}{|a|^2}\,a, \qquad   \mathrm {Q}_a=\mathrm {I}_{n+1} -\mathrm {P}_a.\]
Let first $|a|<1$. Consider the map
 \be\label{E:g_a}
\vp_a x=\frac{a-\mathrm {P}_a x - s_a \mathrm {Q}_a x }{1-x\cdot a}, \qquad s_a=\sqrt{1-|a|^2},\ee
which  is a one-to-one M\"obius transformation satisfying
\be\label{Evf} \vp_a (0)=a, \qquad \vp_a (a)=0, \qquad  \vp_a (\vp_a x)=x,\ee
\be\label{Evf1}
1-|\vp_a x|^2=\frac{(1-|a|^2)(1-|x|^2)}{(1-x\cdot a)^2}, \qquad x\cdot a \neq 1;\ee
see, e.g., Rudin
\cite[Section 2.2.1]{Rud}, Stoll \cite[Section 2.1]{St}.
By (\ref{Evf1}), $\vp_a$  maps the ball $ \bbb^{n+1}$ onto itself and  preserves $\sn$.

In the case $|a|>1$ we replace $a$  by its Kelvin transformation $a^* =a/|a|^2$, so that  $|a^*|<1$. Then
\be\label{Evfp} \vp_{a^*} (0)=a^*, \qquad \vp_{a^*} (a^*)=0, \qquad  \vp_{a^*} (\vp_{a^*} x)=x\ee
and   $\vp_{a^*} (a)=\infty$ in the suitable  sense. The latter means that $\vp_{a^*}$ maps planes through $a$ to planes  parallel to the vector  $a$, and vice versa.

To give this statement  precise meaning,
let $\T_a (n+1,k)$   (or $\Z_a (n+1,k)$) be  the set of all  $k$-planes in $\bbr^{n+1}$ intersecting the open unit ball $\bbb^{n+1}$ and passing through  $a$  (or parallel to the vector $a$, resp.). It is convenient  to use the language of Stiefel manifolds. Specifically, every
 plane  $\t\in \T_a (n+1,k)$ has the form
\be\label{kuytr} \t\equiv \t_\xi=\{ x\in \bbr^{n+1} :\, \xi'x =\xi'a\}, \ee
\[\xi\in  \stnk, \qquad |\xi'a|<1.\]
Similarly, every plane  $\z\in \Z_a (n+1,k)$ is given by
\be\label{kuy44tr} \z\equiv \z_{\eta,t}=\{ y\in \bbr^{n+1} :\, \eta'y =t\}, \ee
\[ \eta\in  {\rm St} (a^\perp, n+1-k), \qquad  t\in \bbr^{n+1-k}, \qquad  |t|<1.\]

\begin{lemma} \label{lemuupz}  Let   $|a|>1$,  $1\le k \le n$.
The involutive map $\vp_{a^*}$ extends  as a bijection from  $\T_a (n+1,k)$ onto   $\Z_a (n+1,k)$. Specifically, if $\t$ has the form (\ref{kuytr}), then $\z=\vp_{a^*} \t$ has the form (\ref{kuy44tr})
with
\be\label {obbmka} \eta = (\mathrm {Q}_{a^*}\xi) \rho^{-1/2}, \qquad \rho= (\mathrm {Q}_{a^*} \xi)'(\mathrm {Q}_{a^*}\xi), \ee
\be\label {oujpa}
 t= - s_{a^*}  \rho^{-1/2}\xi'a.\ee
Conversely, if $\z$ has the form (\ref{kuy44tr}), then $\t=\vp_{a^*} \z$ has the form (\ref{kuytr}) with  \be\label {obbmkb} \xi = (s_{a^*} \eta + a^*t') s^{-1/2}, \qquad s=s_{a^*}^2
\mathrm {I}_{n+1-k} +|a^*|^2 tt'.
 \ee
\end{lemma}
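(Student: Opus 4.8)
The plan is to verify directly that the M\"obius map $\vp_{a^*}$ sends the affine plane $\t_\xi$ of the form (\ref{kuytr}) to the affine plane $\z_{\eta,t}$ of the form (\ref{kuy44tr}), with the claimed frame $\eta$ and translation $t$, and then to run the same computation in reverse. Since $\vp_{a^*}$ is involutive by (\ref{Evfp}), the two directions are logically equivalent, but I expect it to be cleaner to compute the image of a defining equation explicitly in each case rather than to invert a formula. The central algebraic task is the following: the plane $\t_\xi$ is the solution set of $\xi'x=\xi'a$; I would substitute $x=\vp_{a^*} y$ using the definition (\ref{E:g_a}), namely
\[
\vp_{a^*} y=\frac{a^*-\mathrm{P}_{a^*} y - s_{a^*}\mathrm{Q}_{a^*} y}{1-y\cdot a^*},
\]
clear the denominator $1-y\cdot a^*$ (which is nonzero on the relevant domain), and collect the resulting affine equation in $y$ into the normalized form $\eta'y=t$.

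First I would compute $\xi'\vp_{a^*} y$. Writing $\mathrm{P}_{a^*} y=(a^*\cdot y)\,a^*/|a^*|^2$ and using $\mathrm{Q}_{a^*}=\mathrm{I}_{n+1}-\mathrm{P}_{a^*}$, the numerator pairs against $\xi$ to give a linear expression in $y$ whose $y$-dependent part is $-s_{a^*}\,(\mathrm{Q}_{a^*}\xi)'y$, after using that $\mathrm{P}_{a^*}$ and $\mathrm{Q}_{a^*}$ are self-adjoint so that $\xi'\mathrm{Q}_{a^*} y=(\mathrm{Q}_{a^*}\xi)'y$. Setting $\xi'\vp_{a^*} y=\xi'a$ and clearing the denominator turns the plane equation into
\[
-s_{a^*}(\mathrm{Q}_{a^*}\xi)'y + (\text{constant}) = (\xi'a)(1-a^*\cdot y),
\]
so the coefficient of $y$ is, up to sign, $s_{a^*}\mathrm{Q}_{a^*}\xi + (\xi'a)a^*$. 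The key structural point is that the resulting normal vector is automatically orthogonal to $a$: one checks $a\cdot(\mathrm{Q}_{a^*}\xi)=0$ because $\mathrm{Q}_{a^*}$ projects onto $(a^*)^\perp=a^\perp$, which is exactly why the image plane is parallel to $a$ and why the frame $\eta$ in (\ref{obbmka}) lands in ${\rm St}(a^\perp,n+1-k)$. Dividing the normal vector by its length $\rho^{1/2}=\big((\mathrm{Q}_{a^*}\xi)'(\mathrm{Q}_{a^*}\xi)\big)^{1/2}$ to normalize the frame then forces the right-hand constant to become $t=-s_{a^*}\rho^{-1/2}\,\xi'a$, matching (\ref{oujpa}); I would also record why the constant terms not involving $y$ cancel, which uses $\xi'a^*=|a^*|^2\,\xi'\tilde a/|a|\cdots$ type simplifications together with $\mathrm{P}_{a^*}a^*=a^*$.

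For the converse direction I would start from $\eta'y=t$ with $\eta\in{\rm St}(a^\perp,n+1-k)$ and $\eta'a=0$, substitute $y=\vp_{a^*}x$, and collect the equation into the form $\xi'x=\xi'a$; the normal $\xi$ emerging from this computation is $s_{a^*}\eta+a^*t'$ up to the normalizing factor $s^{-1/2}$, where the Gram matrix $s=s_{a^*}^2\mathrm{I}_{n+1-k}+|a^*|^2 tt'$ in (\ref{obbmkb}) is exactly $(s_{a^*}\eta+a^*t')'(s_{a^*}\eta+a^*t')$ once one uses $\eta'\eta=\mathrm{I}_{n+1-k}$, $\eta'a^*=0$ (since $\eta$ has columns in $a^\perp$), and $|a^*|^2=a^*\cdot a^*$. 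Verifying that this $\xi$ lies in $\stnk$, i.e. $\xi'\xi=\mathrm{I}_{n+1-k}$ after the normalization by $s^{-1/2}$, is a direct Gram-matrix identity. The main obstacle I anticipate is purely bookkeeping: keeping the projection identities ($\mathrm{P}_{a^*},\mathrm{Q}_{a^*}$ self-adjoint and idempotent, $\mathrm{Q}_{a^*}a^*=0$, $\mathrm{P}_{a^*}a^*=a^*$) straight while clearing denominators, and confirming that the passage to infinity ``$\vp_{a^*}(a)=\infty$'' is handled correctly, namely that $\t_\xi$ meets $\bbb^{n+1}$ (so $|\xi'a|<1$) translates under the map precisely into the admissibility condition $|t|<1$ for $\z_{\eta,t}$. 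Once the normalization constant $\rho$ and the Gram matrix $s$ are identified, the two formulas (\ref{obbmka})--(\ref{oujpa}) and (\ref{obbmkb}) are forced, and involutivity of $\vp_{a^*}$ guarantees the correspondence is a genuine bijection.
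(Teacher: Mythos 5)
Your overall route is the paper's: compute the image of the defining equation of $\t_\xi$ under the involution $\vp_{a^*}$, read off the new normal frame and offset, normalize by the Gram (polar) factor, and run the symmetric computation for the converse. The converse direction of your sketch is correct as written, including the identification of the Gram matrix $s=s_{a^*}^2\mathrm{I}_{n+1-k}+|a^*|^2tt'$ via $\eta'\eta=\mathrm{I}_{n+1-k}$ and $\eta'a^*=0$, and the check that $\tilde\xi'a=t$ turns $(s_{a^*}\eta'+t(a^*)')x=t$ into the form (\ref{kuytr}).

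However, the forward-direction computation contains a concrete error that, taken literally, breaks the argument. The $y$-dependent part of $\xi'(a^*-\mathrm{P}_{a^*}y-s_{a^*}\mathrm{Q}_{a^*}y)$ is not just $-s_{a^*}(\mathrm{Q}_{a^*}\xi)'y$: since $\mathrm{P}_{a^*}y=(a^*\cdot y)\,a^*/|a^*|^2=(a^*\cdot y)\,a$, the term $-\xi'\mathrm{P}_{a^*}y=-(a^*\cdot y)\,\xi'a$ is also linear in $y$, and it exactly cancels the term $-(\xi'a)(a^*\cdot y)$ produced by the right-hand side $(\xi'a)(1-a^*\cdot y)$. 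After this cancellation the un-normalized normal frame of the image plane is $s_{a^*}\mathrm{Q}_{a^*}\xi$ alone; your claimed normal $s_{a^*}\mathrm{Q}_{a^*}\xi+(\xi'a)a^*$ retains a component along $a$, is therefore \emph{not} orthogonal to $a$, and contradicts your own next sentence (it also would not normalize to $\eta=(\mathrm{Q}_{a^*}\xi)\rho^{-1/2}$ of (\ref{obbmka})). The paper sidesteps this bookkeeping trap by writing the plane equation as $\xi'[\vp_{a^*}y-a]=0$ and using the closed form (\ref{irrr}), $\vp_{a^*}y-a=-\frac{s_{a^*}}{1-y\cdot a^*}(\mathrm{Q}_{a^*}y+s_{a^*}a)$, from which $(\mathrm{Q}_{a^*}\xi)'y=-s_{a^*}\xi'a$ drops out at once; I recommend you do the same, after which your formulas (\ref{obbmka})--(\ref{oujpa}) follow as you describe. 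Two smaller points: since $\mathrm{Q}_{a^*}\xi$ is an $(n+1)\times(n+1-k)$ matrix, ``dividing by its length'' must be understood as the matrix polar decomposition $\mathrm{Q}_{a^*}\xi=\eta\rho^{1/2}$ (your Gram-matrix formulas are consistent with this); and the equivalence $|\xi'a|<1\Leftrightarrow|t|<1$ still needs an argument --- the paper's is simply that $\vp_{a^*}$ maps $\bbb^{n+1}$ onto itself, so a plane meets the ball if and only if its image does.
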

\begin{proof} Let $\t\in \T_a (n+1,k)$ be defined by  (\ref{kuytr}).
Because $|\xi'a|$ is the Euclidean distance from $\t$ to the origin, the condition $|\xi'a|<1$ is equivalent to $\t \cap \bbb^{n+1} \neq \emptyset$. The $\vp_{a^*}$-image of $\t$ is
\[
\vp_{a^*} \t=\{ y \in \bbr^{n+1} :\, \xi'[\vp_{a^*} y -a]=0 \}.\]
By (\ref{E:g_a}),
\be\label {irrr} \vp_{a^*} y -a=-\frac{s_{a^*}}{1-y\cdot  a^*}\, (\mathrm {Q}_{a^*} y + s_{a^*} a),\ee
and therefore
\be\label {oi7t}
\vp_{a^*} \t=\{ y \in \bbr^{n+1} :\, (\mathrm {Q}_{a^*} \xi)'y = -s_{a^*} \xi'a \}.\ee
We represent the $(n+1)\times (n+1-k)$ matrix  $\mathrm {Q}_{a^*} \xi$  in the polar form \be\label{lzxg}
 \mathrm {Q}_{a^*} \xi=\eta \rho^{1/2};\ee
 see, e.g., \cite[pp. 66, 591]{Mu}. Here $\rho= (\mathrm {Q}_{a^*} \xi)'(\mathrm {Q}_{a^*}\xi)$ is a positive definite $(n+1-k)\times (n+1-k)$ matrix and
 \be\label{luug}
\eta = (\mathrm {Q}_{a^*}\xi) \rho^{-1/2}\in {\rm St} (a^\perp, n+1-k). \ee
 Now  (\ref{oi7t}) can be written as
\be\label {oi7tmk}
\vp_{a^*} \t=\{ y \in \bbr^{n+1} :\, \eta'y = t\}, \qquad t= - s_{a^*}  \rho^{-1/2}\xi'a.\ee
Because $\eta$ is orthogonal to the vector $a$, the $k$-plane (\ref{oi7t}) is parallel to $a$. Further, we observe that
\be\label {oi7ta }
|t|= s_{a^*} |\rho^{-1/2}\xi'a|<1\ee
because the $\vp_{a^*}$-image of any line interval in $\t \cap \bbb^{n+1}$ is the line interval in $\vp_{a^*} \t \cap \bbb^{n+1}$.
It follows that  the $k$-plane $\vp_{a^*} \t $ has non-empty intersection with $\bbb^{n+1}$, and therefore,  $\vp_{a^*} \t \in \Z_a (n+1,k)$.

Conversely, let  $\z\in \Z_a (n+1,k)$ be defined by  (\ref{kuy44tr}).
Then $\vp_{a^*} \z=\{x \in \bbr^{n+1} :\, \eta' \vp_{a^*} x =t \}$.
Using (\ref{E:g_a}) and keeping in mind that $\eta$ is orthogonal to the vector $a$, we obtain
\[
\eta' \vp_{a^*} x =-\frac{s_{a^*} \eta' x}{1-x \cdot a^*}.\]
Hence the equality $\eta' \vp_{a^*} x =t$ is equivalent to
$s_{a^*} \eta' x =t (1-x \cdot a^*)$
or
\be\label{mzwwq} (s_{a^*} \eta'  +t(a^*)')x=t.\ee
Setting $\tilde \xi=s_{a^*} \eta + a^*t'$, we can write (\ref{mzwwq}) in the required form as  $\tilde \xi'x=\tilde \xi' a$.
To complete the proof, it remains to normalize $\tilde \xi$ by writing it  in the polar form
\[\tilde \xi =\xi s^{1/2}, \quad \xi \in \stnk, \quad s=\tilde \xi'\tilde \xi=s_{a^*}^2
\mathrm {I}_{n+1-k} +|a^*|^2 tt'.\]
\end{proof}

Now we   introduce another  automorphism that plays an important role in our paper.  Given two distinct points $a$ and $x$, let $L_{a,x}$ be the straight line  through these points.  Assuming  $|a|>1$  and $x \in \sn$, we  define the  reflection map
\[\tau_a :\sn \to \sn, \quad x  \to \tilde x,\]
 where $\tilde x$ is the second point at which  $L_{a,x}$ meets  $\sn$. If $L_{a,x}$ is  tangent  to $\sn$, we simply set $\tilde x =x$. A straightforward computation gives
  \be\label {jhbcr1}
\tilde x=\tau_a x =\frac{(|a|^2 -1)\, x + 2(1-x\cdot a)\, a }{|x -a|^2}.\ee
%as in \cite[formula (2.7)]{AR}.

\begin{lemma}\label{L:muaR} Let  $\mathrm {R}_a= \mathrm {I}_{n+1}-2 \mathrm {P}_a$ be  the reflection  in  $a^\perp$, as in (\ref{lili1}). Then
\be \label{cn4hos}  \tau_a x=\vp_{a^*}\mathrm {R}_a \vp_{a^*} x, \qquad  x \in \sn.\ee
\end{lemma}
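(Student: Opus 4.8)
The plan is to prove the identity \eqref{cn4hos} by a direct calculation, but to organize it so that the M\"obius structure does the bookkeeping for us rather than expanding coordinates blindly. The key observation is that $\vp_{a^*}$ is an involution (by \eqref{Evfp}), so the claim $\tau_a = \vp_{a^*}\mathrm{R}_a\vp_{a^*}$ is equivalent to the conjugation statement $\vp_{a^*}\tau_a\vp_{a^*} = \mathrm{R}_a$ on $\sn$. This reformulation is attractive because $\mathrm{R}_a$ has a transparent characterization: it is the unique nontrivial isometry of $\sn$ fixing the great subsphere $\sn\cap a^\perp$ pointwise and interchanging the two hemispheres determined by $a^\perp$. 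So I would try to verify that the map $\vp_{a^*}\tau_a\vp_{a^*}$ has exactly these two properties.

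First I would establish the geometric meaning of $\tau_a$ under the M\"obius transport. The map $\vp_{a^*}$ sends $a$ to $\infty$ (in the sense recorded after \eqref{Evfp}) and sends lines through $a$ to lines parallel to $a$, by the plane-bijection already proved in Lemma \ref{lemuupz} applied with $k=1$. Since $\tau_a$ is defined as the reflection of $x\in\sn$ to the second intersection point $\tilde x$ of the line $L_{a,x}$ with $\sn$, conjugating by $\vp_{a^*}$ turns this into the following operation: take a point $\vp_{a^*}x$ on $\sn$, follow the line through it parallel to $a$ to its other intersection with $\sn$, and that is $\vp_{a^*}\tilde x$. But reflecting a sphere point to the other end of the chord parallel to a fixed direction $a$ is precisely the Euclidean reflection $\mathrm{R}_a$ in the hyperplane $a^\perp$ through the center, because chords parallel to $a$ are symmetric about $a^\perp$. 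This is the conceptual heart of the argument, and I expect it to carry the proof once stated carefully.

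To make the chord-reflection step rigorous I would pin down that $\vp_{a^*}$ maps $L_{a,x}\cap\sn = \{x,\tilde x\}$ to a chord of $\sn$ parallel to $a$ whose two endpoints are $\vp_{a^*}x$ and $\vp_{a^*}\tilde x$. The containment of endpoints in $\sn$ uses that $\vp_{a^*}$ preserves $\sn$ (from \eqref{Evf1} applied to $a^*$, since $|\vp_{a^*}y|=1$ when $|y|=1$); the parallelism to $a$ is the $k=1$ instance of Lemma \ref{lemuupz}, noting that a line through $a$ meeting $\bbb^{n+1}$ maps to a line parallel to $a$ meeting $\bbb^{n+1}$. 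Then, since the two endpoints of a sphere-chord parallel to $a$ are exchanged by $\mathrm{R}_a$ and $\mathrm{R}_a$ fixes $\sn\cap a^\perp$ where the chord degenerates (the tangent case $\tilde x=x$), the conjugated map agrees with $\mathrm{R}_a$ on all of $\sn$.

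The main obstacle, I expect, will be the degenerate and boundary cases rather than the generic geometry: handling the tangency case $\tilde x = x$ consistently (where the chord has length zero and must map to a fixed point of $\mathrm{R}_a$, i.e. a point of $a^\perp\cap\sn$), and confirming that the $k=1$ application of Lemma \ref{lemuupz} genuinely covers lines (the lemma is stated for $1\le k\le n$, so $k=1$ is admissible). As a safeguard, or as an alternative if the synthetic argument feels too informal for the paper's style, I would be prepared to substitute the explicit formula \eqref{irrr} for $\vp_{a^*}y - a$ together with the closed form \eqref{jhbcr1} for $\tau_a x$ and simply compute $\vp_{a^*}(\mathrm{R}_a(\vp_{a^*}x))$ directly, checking it equals \eqref{jhbcr1}; this is routine but tedious, and I would use $|\vp_{a^*}x|=1$ and the involution relations \eqref{Evfp} to keep the algebra under control.
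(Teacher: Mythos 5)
Your argument is essentially the paper's own proof: the authors likewise invoke Lemma \ref{lemuupz} to see that $\vp_{a^*}$ carries the segment $[x,\tau_a x]$ of the line $L_{a,x}$ to a chord of $\sn$ parallel to $a$, observe that $\mathrm{R}_a$ swaps the endpoints of such a chord, and then conclude by the involutivity of $\vp_{a^*}$. Your reformulation as $\vp_{a^*}\tau_a\vp_{a^*}=\mathrm{R}_a$ and your attention to the tangency case are harmless refinements of the same route.
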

\begin{proof}
By Lemma  \ref{lemuupz},
 the map $\vp_{a^*}$ transforms  the segment $[x, \tau_ax]$ into the  segment
 $[\vp_{a^*} x, \vp_{a^*} \tau_a x]$ on the line $\vp_{a^*} L_{a,x}$, which is parallel to the vector  $a$. Hence, by  the definition of the reflection $\mathrm {R}_a$, we have $\mathrm {R}_a \vp_{a^*} x=\vp_{a^*}\tau_a x.$ Now (\ref{cn4hos}) follows because $\vp_{a^*}$ is an involution.
\end{proof}

\section{Injectivity and Inversion of $F_a$}

Our treatment of the shifted Funk transform
 \[(F_a f)(\t) = \intl_{\sn \cap \,\t} f(x)\, d\sig (x), \quad \t \in \T_a (n+1,k), \quad |a|>1,\]
relies on the following theorem establishing connection between $F_a$ and
 the parallel slice transform $\Pi_a$.  For $y\in \sn$ we denote
\be\label{vt1de}
(M_{a^*}f)(y)\!=\!\left ( \frac {s_{a^*} }{1\!-\!a^*\cdot  y}\right )^{k-1}\!\!(f\circ \vp_{a^*})(y), \quad s_{a^*}\!=\!\sqrt{1\!-\!|a^*|^2}, \ee
 where $\vp_{a^*}$ is the spherical automorphism, which is  defined by (\ref{E:g_a}) with $a$  replaced by $a^*=a/|a|^2$.

\begin{theorem}\label {ukdndt}
If  $f\in C(\sn)$,   then
\be\label{vt1ata}
(F_af)(\t)= (\Pi_a M_{a^*} f)(\vp_{a^*}\t), \qquad \t \in \T_a (n+1,k). \ee
\end{theorem}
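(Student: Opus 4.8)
The plan is to deduce (\ref{vt1ata}) from a single change of variables in the integral defining $F_af$, using the M\"obius automorphism $\vp_{a^*}$ as the substitution. By Lemma \ref{lemuupz}, if $\t\in\T_a(n+1,k)$ then $\z=\vp_{a^*}\t$ belongs to $\Z_a(n+1,k)$; since $\vp_{a^*}$ preserves $\sn$ and is involutive, it restricts to a bijection of the spherical section $\sn\cap\t$ onto $\sn\cap\z$. Writing $x=\vp_{a^*}y$ with $y\in\sn\cap\z$, I would recast
\[(F_af)(\t)=\intl_{\sn\cap\,\t} f(x)\,d\sig(x)=\intl_{\sn\cap\,\z} (f\circ\vp_{a^*})(y)\,J(y)\,d\sig(y),\]
where $J(y)$ is the factor by which $\vp_{a^*}$ distorts the $(k-1)$-dimensional surface measure along the section. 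The whole proof then reduces to identifying $J(y)=\big(s_{a^*}/(1-a^*\cdot y)\big)^{k-1}$, for then the integrand becomes exactly $(M_{a^*}f)(y)$ and the right-hand side is $(\Pi_aM_{a^*}f)(\z)=(\Pi_aM_{a^*}f)(\vp_{a^*}\t)$.

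To compute $J$, the key observation is that $\vp_{a^*}$ acts conformally on $\sn$, so that $J$ is simply the $(k-1)$-st power of a single isotropic linear scaling factor. I would obtain this factor by polarizing the identity (\ref{Evf1}). A direct computation from (\ref{E:g_a}) gives, for all $x,y\in\sn$,
\[1-\vp_{a^*}x\cdot\vp_{a^*}y=\frac{s_{a^*}^2\,(1-x\cdot y)}{(1-x\cdot a^*)(1-y\cdot a^*)},\]
whose diagonal $y=x$ recovers (\ref{Evf1}). Since $|p-q|^2=2(1-p\cdot q)$ for any $p,q\in\sn$, this polarized identity is equivalent to
\[|\vp_{a^*}x-\vp_{a^*}y|^2=\frac{s_{a^*}^2}{(1-x\cdot a^*)(1-y\cdot a^*)}\,|x-y|^2.\]
Letting $y\to x$ shows that the differential of $\vp_{a^*}|_{\sn}$ scales every tangent vector at $y$ by the common factor $\mu(y)=s_{a^*}/(1-y\cdot a^*)$, which is positive because $|y\cdot a^*|\le|a^*|<1$.

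Conformality is exactly what makes $\mu$ isotropic, hence independent of the direction of the section: restricted to the $(k-1)$-dimensional submanifold $\sn\cap\z$, the map $\vp_{a^*}$ multiplies lengths by $\mu(y)$ in every tangent direction, and therefore multiplies the $(k-1)$-dimensional surface measure by $\mu(y)^{k-1}$. Thus $J(y)=\mu(y)^{k-1}=\big(s_{a^*}/(1-a^*\cdot y)\big)^{k-1}$, and substituting into the displayed change of variables yields
\[(F_af)(\t)=\intl_{\sn\cap\,\z}\Big(\frac{s_{a^*}}{1-a^*\cdot y}\Big)^{k-1}(f\circ\vp_{a^*})(y)\,d\sig(y)=(\Pi_aM_{a^*}f)(\vp_{a^*}\t),\]
which is (\ref{vt1ata}).

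The main obstacle I anticipate is the rigorous justification of the measure-transformation step, namely that the $(k-1)$-dimensional surface measure on the moving section transforms by the $(k-1)$-st power of the conformal factor. This requires knowing that $\mu$ is genuinely isotropic and that the induced metric on $\sn\cap\z$ is distorted uniformly; the polarized identity above supplies exactly this, since the scaling factor it produces depends only on the base point $y$ and not on the tangent direction. A secondary, purely computational point is verifying the polarized identity itself directly from (\ref{E:g_a}); this is a routine but slightly lengthy expansion using $\mathrm{P}_{a^*}+\mathrm{Q}_{a^*}=\mathrm{I}_{n+1}$ and $|a^*|^2=1-s_{a^*}^2$, where one must keep careful track of the denominators $1-x\cdot a^*$ and $1-y\cdot a^*$ throughout.
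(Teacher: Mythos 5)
Your proof is correct, but it takes a genuinely different route from the one in the paper. You treat (\ref{vt1ata}) as a single change of variables $x=\vp_{a^*}y$ on the $(k-1)$-dimensional section, and you extract the Jacobian from the conformality of $\vp_{a^*}|_{\sn}$: the polarized form of (\ref{Evf1}), namely $1-\vp_{a^*}x\cdot\vp_{a^*}y=s_{a^*}^2(1-x\cdot y)\,(1-x\cdot a^*)^{-1}(1-y\cdot a^*)^{-1}$, does hold (it is the standard identity for the automorphisms of \cite[Section 2.2.1]{Rud}), and since $|p-q|^2=2(1-p\cdot q)$ on $\sn$ it shows that $d\vp_{a^*}$ at $y$ is $\mu(y)=s_{a^*}/(1-a^*\cdot y)$ times an isometry, so every $(k-1)$-dimensional surface measure is scaled by $\mu(y)^{k-1}$; note $\mu>0$ everywhere since $|a^*|<1$. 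This immediately produces the weight in (\ref{vt1de}). The paper instead regularizes $F_a$ by a bump function $\om_\e$, transports the integral over $\sn$ by the global substitution formula (\ref{fff}), passes to bispherical coordinates adapted to the frame $\eta$, applies the implicit function theorem to take the limit $\e\to 0$, and then carries out a matrix-determinant computation (via $\det(\mathrm{I}_p+AB)=\det(\mathrm{I}_q+BA)$) to match the normalizing factors $(1-|\xi'a|^2)^{-1/2}$ on both sides. Your argument is shorter and makes the origin of the exponent $k-1$ transparent, at the cost of having to justify two points you correctly flag: the polarized identity itself and the passage from pointwise conformal scaling to the transformation of the induced $(k-1)$-dimensional measure on the moving section. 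The paper's longer computation avoids any appeal to conformal geometry, works directly from the distributional definition of the section integral, and as a by-product re-derives and cross-checks the explicit $(\xi)\leftrightarrow(\eta,t)$ correspondence of Lemma \ref{lemuupz}; it also runs parallel to the argument of \cite{AR} for $|a|<1$, which is presumably why that route was chosen. Both proofs are valid.
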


We prove this theorem in the next section and now continue our investigation, taking into account that $M_{a^*}$ is one-to-one and its inverse has the form
 \be\label{tyhce1}
 (M_{a^*}^{-1} f)(x)=s_{a^*}^{1-k}(1-a^* \cdot \vp_{a^*} x )^{k-1}\, (f \circ \vp_{a^*})(x).\ee

 Injectivity of $\Pi_a$  was discussed in  Theorem \ref{jsbsa78}.
It follows that $F_a$
is injective on functions $f\in C(\sn)$ if and only if
\be\label{mmffju}
(M_{a^*} f)(y) = (M_{a^*} f)(\mathrm {R}_a y), \ee
or
\be\label{mmju} (f\circ \vp_{a^*})(y)=\left (\frac{1\!-\!a^*\cdot  y}{1-  a^* \cdot \mathrm {R}_a y}\right )^{k-1} (f\circ\vp_{a^*})(\mathrm {R}_a y).\ee
Further, the kernel of $F_a$ is formed by functions $f$, for which $M_{a^*} f$ belongs to the kernel of $\Pi_a$, that is $(M_{a^*} f)(y) = - (M_{a^*} f)(\mathrm {R}_a y)$.
Setting $y=\vp_{a^*} x$,
\be\label {nxgbsa1}
\rho_{a^*} (x)=\left (\frac{1\!-\!a^* \cdot \vp_{a^*} x}{1\!-\!a^* \cdot \mathrm {R}_a \vp_{a^*} x}\right )^{k-1},  \ee
and noting that $\vp_{a^*} \mathrm {R}_a \vp_{a^*} x=\t_a x$ (see Lemma \ref{L:muaR}),
we  can write (\ref{mmffju}) in the equivalent form
\be\label{iUty0}
f(x)= (W_a f)(x), \qquad (W_a f)(x)=\rho_{a^*} (x) f(\t_a x). \ee
The weight function $\rho_{a^*} (x)$ can be represented in a simpler form
\be\label {nxgbsa1p}
\rho_{a^*} (x)=\left (\frac{1-|a^*|^2}{|a^*- x|^2}\right )^{k-1}  =\left (\frac{|a|^2 -1}{|a- x|^2}\right )^{k-1} \ee
(we leave straightforward computation to the reader).

\begin{lemma} \label {u54aq} The operator $W_{a}$ is an involution, i.e., $W_{a}W_{a} f=f$.
\end{lemma}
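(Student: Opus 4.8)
The plan is to verify the involution property $W_a W_a f = f$ by a direct computation, unwinding the definition $(W_a f)(x) = \rho_{a^*}(x)\, f(\tau_a x)$ in (\ref{iUty0}). Applying $W_a$ twice gives
\[
(W_a W_a f)(x) = \rho_{a^*}(x)\,(W_a f)(\tau_a x) = \rho_{a^*}(x)\,\rho_{a^*}(\tau_a x)\, f(\tau_a \tau_a x).
\]
So the statement reduces to two independent facts: that $\tau_a$ is itself an involution, i.e.\ $\tau_a \tau_a x = x$, and that the weight satisfies the cocycle-type identity $\rho_{a^*}(x)\,\rho_{a^*}(\tau_a x) = 1$.

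First I would dispose of $\tau_a \tau_a x = x$. This is essentially immediate from the geometric definition of $\tau_a$: for $x \in \sn$, the point $\tau_a x$ is the second intersection of the line $L_{a,x}$ through $a$ and $x$ with the sphere, so $L_{a,\tau_a x} = L_{a,x}$ and the second intersection point of this same line is $x$ again. (In the degenerate tangent case the convention $\tilde x = x$ makes this trivial.) Alternatively, and more cleanly for the paper, I would invoke Lemma \ref{L:muaR}: since $\tau_a = \vp_{a^*}\mathrm{R}_a \vp_{a^*}$ and both $\vp_{a^*}$ and $\mathrm{R}_a$ are involutions with $\vp_{a^*}\vp_{a^*} = \mathrm{id}$, we get
\[
\tau_a \tau_a = \vp_{a^*}\mathrm{R}_a \vp_{a^*}\vp_{a^*}\mathrm{R}_a \vp_{a^*} = \vp_{a^*}\mathrm{R}_a \mathrm{R}_a \vp_{a^*} = \vp_{a^*}\vp_{a^*} = \mathrm{id}.
\]

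The main work is the weight identity $\rho_{a^*}(x)\,\rho_{a^*}(\tau_a x) = 1$, and here I would use the simplified form (\ref{nxgbsa1p}), namely $\rho_{a^*}(x) = \left(\frac{|a|^2-1}{|a-x|^2}\right)^{k-1}$. The claim then amounts to showing
\[
\frac{(|a|^2-1)^2}{|a-x|^2\,|a-\tau_a x|^2} = 1, \qquad \text{i.e.} \qquad |a-x|^2\,|a-\tau_a x|^2 = (|a|^2-1)^2.
\]
This is exactly the power-of-a-point relation: since $x$ and $\tau_a x$ are the two intersections of a line through $a$ with $\sn$, the signed product of distances from $a$ along that line equals the power of $a$ with respect to the unit sphere, which is $|a|^2 - 1$. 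Taking absolute values (both $x, \tau_a x$ lie on the same side relative to $a$ when $|a|>1$, so the distances have the same sign) yields $|a-x|\,|a-\tau_a x| = |a|^2-1$, squaring which gives the identity. I expect the main obstacle — though it is only a routine algebraic check rather than a conceptual one — to be confirming this power-of-a-point relation directly from the explicit formula (\ref{jhbcr1}) if one prefers not to appeal to the geometric argument; substituting (\ref{jhbcr1}) into $|a - \tau_a x|^2$ and simplifying using $|x|=1$ should collapse to $(|a|^2-1)^2/|a-x|^2$, completing the proof. I would favor the geometric power-of-a-point argument for brevity, with the explicit substitution available as a fallback.
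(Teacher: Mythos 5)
Your proof is correct, and its overall skeleton is the same as the paper's: expand $(W_aW_af)(x)=\rho_{a^*}(x)\,\rho_{a^*}(\tau_a x)\,f(\tau_a\tau_a x)$, note $\tau_a\tau_a=\mathrm{id}$, and verify the weight identity $\rho_{a^*}(x)\,\rho_{a^*}(\tau_a x)=1$. Where you differ is in how that last identity is checked. The paper cites the form (\ref{nxgbsa1}), for which the identity is immediate from Lemma \ref{L:muaR}: since $\vp_{a^*}\tau_a x=\mathrm{R}_a\vp_{a^*}x$ and $\mathrm{R}_a$ is an involution, passing from $x$ to $\tau_a x$ simply swaps the numerator and denominator of (\ref{nxgbsa1}), so the product telescopes to $1$ with no metric computation. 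You instead work from the simplified form (\ref{nxgbsa1p}) and reduce the identity to the power-of-a-point relation $|a-x|\,|a-\tau_a x|=|a|^2-1$, which is valid (and indeed follows in one line from (\ref{jhbcr1}), since $a-\tau_a x=(|a|^2-1)(a-x)/|x-a|^2$ when $|x|=1$). Your route has the advantage of being self-contained and geometrically transparent, and it incidentally supplies the "straightforward computation" the paper leaves to the reader in deriving (\ref{nxgbsa1p}); the paper's route is shorter because it exploits the conjugation structure $\tau_a=\vp_{a^*}\mathrm{R}_a\vp_{a^*}$ already established. Either argument is acceptable.
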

\begin{proof} Taking into account that $\tau_a\tau_a x=x$,
 we have
$(W_{a}W_{a} f)(x)=\rho_{a^*} (x) \rho_{a^*} (\tau_a x )\, f(x)$.  By (\ref{nxgbsa1}), $\rho_{a^*} (x) \rho_{a^*} (\tau_a x )=1$. This gives the result.
\end{proof}

The above reasoning shows that $W_{a}$ serves as a reflection map in our consideration and  the following statements hold.

\begin{theorem} \label{luqw} Let $1<k\le n$.
\begin {enumerate}

\item [(a)]
The operator $F_a$ is injective on functions $f \in C(\sn)$ satisfying the functional equation
\be\label{ixy0}  f=  W_a f.\ee

\item [(b)] The kernel of $F_a$ has the form
\be\label{ixy0a} \ker (F_a)= \{ f \in C(\sn) : \,f= - W_a f\}.\ee

\end{enumerate}
\end{theorem}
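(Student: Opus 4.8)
The plan is to transport the entire question through the identity (\ref{vt1ata}) of Theorem \ref{ukdndt}, so that injectivity and the kernel of $F_a$ can be read off from the corresponding facts for $\Pi_a$ established in Theorem \ref{jsbsa78}. First I would observe that, by Lemma \ref{lemuupz}, the map $\vp_{a^*}$ is a bijection of $\T_a (n+1,k)$ onto $\Z_a (n+1,k)$; hence as $\t$ runs over all $k$-planes through $a$, the argument $\vp_{a^*}\t$ runs over all $k$-planes parallel to $a$. Consequently (\ref{vt1ata}) shows that $F_a f=0$ holds identically if and only if $\Pi_a (M_{a^*}f)=0$, i.e. $M_{a^*}f\in \ker(\Pi_a)$. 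By Theorem \ref{jsbsa78} the latter condition is exactly $M_{a^*}f\in C^-_a(\sn)$, that is $(M_{a^*}f)(y)=-(M_{a^*}f)(\mathrm{R}_a y)$.

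The next step is to rewrite the two symmetry conditions on $M_{a^*}f$ as functional equations for $f$ itself. Unwinding the definition (\ref{vt1de}) of $M_{a^*}$, the evenness condition $M_{a^*}f\in C^+_a(\sn)$ is (\ref{mmffju}), equivalently (\ref{mmju}); substituting $y=\vp_{a^*}x$, using that $\vp_{a^*}$ is an involution and that $\vp_{a^*}\mathrm{R}_a\vp_{a^*}x=\tau_a x$ by Lemma \ref{L:muaR}, this becomes precisely $f=W_a f$ with $W_a$ as in (\ref{iUty0}). The same substitution applied to the oddness condition $M_{a^*}f\in C^-_a(\sn)$ turns it into $f=-W_a f$. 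Thus the passage $f\mapsto M_{a^*}f$ intertwines the reflection $\mathrm{R}_a$ on the target with the involution $W_a$ of Lemma \ref{u54aq} on the source, carrying the $\mathrm{R}_a$-even/odd decomposition to the $(\pm1)$-eigenspace decomposition for $W_a$.

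With these equivalences in hand, part (b) is immediate: $F_a f=0$ iff $M_{a^*}f\in\ker(\Pi_a)=C^-_a(\sn)$ iff $f=-W_a f$, which is (\ref{ixy0a}). For part (a) I note that $\{f: f=W_a f\}$ is a linear subspace (the $+1$-eigenspace of the linear involution $W_a$), so injectivity of $F_a$ there amounts to showing that $f=W_a f$ together with $F_a f=0$ forces $f=0$. But $F_a f=0$ gives $f=-W_a f$ by (b), and adding this to $f=W_a f$ yields $2f=0$, hence $f=0$.

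I do not expect a serious obstacle here, since Theorem \ref{ukdndt} carries the analytic weight and Theorem \ref{jsbsa78} supplies the kernel of $\Pi_a$; the only point requiring genuine care is the bookkeeping in the middle step, namely checking that the weight $(1-a^*\cdot y)^{k-1}$ produced by $M_{a^*}$ collapses, after the substitution $y=\vp_{a^*}x$, to exactly the factor $\rho_{a^*}$ of (\ref{nxgbsa1}) and then, via (\ref{nxgbsa1p}), to $\big((|a|^2-1)/|a-x|^2\big)^{k-1}$, so that the two symmetry conditions really correspond to $f=\pm W_a f$. Once that identification is confirmed, the remainder of the argument is purely formal.
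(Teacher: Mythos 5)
Your proposal is correct and follows essentially the same route as the paper: the authors likewise combine Theorem \ref{ukdndt} with the kernel description of $\Pi_a$ from Theorem \ref{jsbsa78}, then translate the $\mathrm{R}_a$-parity conditions on $M_{a^*}f$ into $f=\pm W_a f$ via the substitution $y=\vp_{a^*}x$ and Lemma \ref{L:muaR}, exactly as you describe. The only cosmetic difference is that you deduce part (a) from part (b) by the eigenspace argument $2f=0$, whereas the paper appeals directly to the injectivity of $\Pi_a$ on $C^+_a(\sn)$; these are logically equivalent.
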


\begin{theorem}\label{i00s} Let  $a\in \bbr^{n+1}$, $|a|>1$.
A   function $f\in C(\sn)$ satisfying  (\ref{ixy0}) can be uniquely reconstructed from $g=F_a f$ by the formula
\be\label{irxy0} f=M_{a^*}^{-1}\Pi_a^{-1} g_a, \qquad g_a= g\circ \vp_{a^*},\ee
 where  $M_{a^*}^{-1}$ and $\Pi_a^{-1}$ are defined by  (\ref{tyhce1}) and (\ref{xcmnvb}),  respectively.
\end {theorem}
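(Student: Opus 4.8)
The plan is to obtain the inversion formula by chaining together the three partial inverses already at our disposal, using the core identity of Theorem \ref{ukdndt} as the engine. First I would put $g=F_a f$ and rewrite the identity of Theorem \ref{ukdndt},
\[(F_af)(\t)= (\Pi_a M_{a^*} f)(\vp_{a^*}\t), \qquad \t \in \T_a (n+1,k),\]
in terms of the parallel family $\Z_a(n+1,k)$. By Lemma \ref{lemuupz}, $\vp_{a^*}$ is an involutive bijection of $\T_a (n+1,k)$ onto $\Z_a (n+1,k)$; substituting $\t=\vp_{a^*}\z$ with $\z\in\Z_a(n+1,k)$ and using $\vp_{a^*}\vp_{a^*}\z=\z$ turns this into
\[g_a(\z)\equiv (g\circ \vp_{a^*})(\z)=(\Pi_a M_{a^*} f)(\z), \qquad \z\in\Z_a(n+1,k),\]
so that $g_a=\Pi_a(M_{a^*}f)$ as functions on the family of parallel planes.

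The next step is to place $M_{a^*}f$ in the subspace where $\Pi_a$ is invertible. The computation preceding (\ref{iUty0}) shows that the hypothesis $f=W_a f$ in (\ref{ixy0}) is equivalent to (\ref{mmffju}), i.e.\ to $(M_{a^*}f)(y)=(M_{a^*}f)(\mathrm{R}_a y)$; hence $M_{a^*}f\in C^+_a(\sn)$. By Theorem \ref{jsbsa78} the transform $\Pi_a$ is injective on $C^+_a(\sn)$, and by the discussion following (\ref{E:F-1nh}) the operator $\Pi_a^{-1}$ of Theorem \ref{lkuyr} is a left inverse of $\Pi_a$ on this subspace. Applying it to $g_a=\Pi_a(M_{a^*}f)$ gives $\Pi_a^{-1}g_a=M_{a^*}f$, and then the bijectivity of $M_{a^*}$, with inverse (\ref{tyhce1}), yields
\[M_{a^*}^{-1}\Pi_a^{-1}g_a=M_{a^*}^{-1}M_{a^*}f=f,\]
which is precisely (\ref{irxy0}). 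Uniqueness is immediate from the injectivity of $F_a$ on functions satisfying (\ref{ixy0}), established in Theorem \ref{luqw}(a).

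Two bookkeeping matters deserve attention. The first is that Theorem \ref{lkuyr} is stated for $a\in\bbb^{n+1}$ while here $|a|>1$; this is harmless, since $\Pi_a$, the reflection $\mathrm{R}_a$, the projection $\mathrm{Q}_a$, and the direction $\tilde a$ are all unchanged under $a\mapsto a^*$, so the formulas (\ref{xcmnvb})--(\ref{xcmnvb1}) apply verbatim with $a^*$ (equivalently with $\tilde a$) in place of $a$. The second, and the only genuine subtlety, is the even/odd matching in the middle step: one must verify that the weight-twisted reflection $W_a$ corresponds, under the substitution $y=\vp_{a^*}x$ together with Lemma \ref{L:muaR}, to the ordinary reflection $\mathrm{R}_a$ acting on $M_{a^*}f$, so that $f=W_a f$ indeed places $M_{a^*}f$ in $C^+_a(\sn)$ and not in the odd complement $C^-_a(\sn)=\ker(\Pi_a)$. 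Once this correspondence is fixed, the inversion is the straightforward composition of the three already-established maps $\vp_{a^*}$, $\Pi_a^{-1}$, and $M_{a^*}^{-1}$.
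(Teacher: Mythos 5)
Your proposal is correct and follows essentially the same route as the paper: Theorem \ref{ukdndt} gives $g_a=\Pi_a(M_{a^*}f)$, the hypothesis $f=W_af$ is (via Lemma \ref{L:muaR} and the computation around (\ref{mmffju})--(\ref{iUty0})) exactly the statement that $M_{a^*}f\in C^+_a(\sn)$, and then $\Pi_a^{-1}$ and $M_{a^*}^{-1}$ are applied in succession, with uniqueness from Theorem \ref{luqw}(a). Your two bookkeeping remarks (that $\Pi_a$ and the associated reflection depend only on the direction of $a$, and the even/odd matching under $y=\vp_{a^*}x$) are precisely the points the paper settles in the discussion preceding the theorem statement.
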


 \section{Proof of Theorem  \ref{ukdndt}}

  We proceed as in our previous work \cite{AR}, however, with several  changes caused by the essence of the matter.
Let  $\t  \in \T_a (n+1,k)$ have the form
(\ref{kuytr})  and write
 \[(F_a f)(\t_\xi)=\intl_{\{x\in \sn: \,\xi' (x-a)=0\}}\!\! \!\!\!f(x)\, d\sig (x), \qquad \xi\in  \stnk.\]
 By Lemma \ref{lemuupz},  it suffices to show that
 \be\label{myywq}
(F_a f)(\t_\xi)= (\Pi_a M_{a^*} f)(\z_{\eta,t}),\ee
 where $\t_\xi$ and $\z_{\eta,t}=\vp_{a^*}\t_\xi$ are related by (\ref{obbmka})-(\ref{oujpa}).
We set
\be\label{kuyb}
 (F_{a, \e} f)(\xi)=\intl_{\sn} f(x)\, \om_\e (\xi'(x-a))\, dx,\qquad \e >0,\ee
where $\om_\e$ is a radial bump function supported on the ball in $\bbr^{n+1-k}$ of radius $\e$ with center at the origin, so that for any function $g$, which is continuous in a neighborhood of the origin, we have
\[\lim\limits_{\e\to 0} \intl_{|t|<\e}\om_\e (t)\,g(t)\,dt=g(0).\]
One can  show that
\be\label{kuyb1}
 \lim\limits_{\e\to 0} (F_{a, \e} f)(\xi)=(1-|\xi' a|^2)^{-1/2}(F_{a} f)(\t_\xi).\ee
The proof of this equality is a verbatim copy of Step I in the similar proof in \cite [Section 6]{AR}, and we skip it.

 Let us obtain an alternative expression for the limit
(\ref{kuyb1}), now in terms of the automorphism $\vp_{a^*}$. We make use of the equality

\be\label{fff}\intl_{\sn} f(x)\, dx= s_{a^*}^n \intl_{\sn} \frac{(f\circ \vp_{a^*})(y)}
{(1-  a^* \cdot y)^n}\, dy,\ee
which was proved in \cite[Lemma 2.1]{AR}.
By this equality,
\[
 (F_{a, \e} f)(\xi)= s_{a^*}^n \intl_{\sn} \frac{(f\circ \vp_{a^*})(y)}
{(1-  a^* \cdot y)^n}\, \,\om_\e (\xi'[\vp_{a^*} y-a])\, dy,\]
where (cf. (\ref{irrr}))
\[
\xi'[\vp_{a^*} y-a]= -\frac{s_{a^*}}{1-  a^* \cdot y}\,  [(\mathrm {Q}_{a^*} \xi)'y + s_{a^*} \xi'a].
\]
Denote
\[ \tilde f (y)= s_{a^*}^n \frac{(f\circ \vp_{a^*})(y)}
{(1-  a^* \cdot y)^n}, \qquad t_0= - s_{a^*} \xi'a,\]
and write $\mathrm {Q}_{a^*} \xi$ in the polar form
\be\label {jnlli} \mathrm {Q}_{a^*} \xi=\eta \rho^{1/2}, \quad \rho= (\mathrm {Q}_{a^*} \xi)'(\mathrm {Q}_{a^*} \xi), \quad \eta = (\mathrm {Q}_{a^*} \xi)  \rho^{-1/2}.\ee
This gives
\[
 (F_{a, \e} f)(\xi)= \intl_{\sn} \tilde f (y)\, \om_\e
 \left(\frac{s_{a^*}}{1-  a^* \cdot y}\, (\rho^{1/2} \eta'y -t_0)\right ) dy.\]
The frame $\eta \in \stnk$ spans an $(n+1-k)$-dimensional subspace that will be denoted by $\{\eta \}$.

Let us pass to bispherical coordinates    (see, e.g., \cite [p. 31]{Ru15})
\[ y= \left[\begin{array} {c}
\vp \, \sin \th \\ \psi\, \cos \,\th \end{array} \right], \quad   \vp \in \sn \cap \eta^\perp, \quad \psi \in \sn \cap \{\eta\}, \quad 0 \!\le\!\theta \!\le\! \pi/2,,\]
\[
 \qquad  dy=
 \sin^{k-1}\theta \, \cos^{n-k}\theta \, d\theta d\vp d\psi,\]
and set $s=\cos\, \th$. We obtain
\bea
&&(F_{a, \e} f)(\xi)= \intl_{0}^1 s^{n-k} (1-s^2)^{(k-2)/2} ds \intl_{\sn \cap \eta^\perp} d\vp \nonumber\\
&&\times \intl_{\sn \cap\{\eta\}}
\tilde f \left(\left[\begin{array} {c}
\sqrt{1-s^2} \vp   \\ s\psi\ \end{array} \right] \right )\, \om_\e
\left(\frac{s_{a^*} ( \rho^{1/2} s\psi -t_0)}{1-a^* \cdot  (\sqrt{1-s^2}\, \vp+s\psi)}\right ) \, d\psi,\nonumber \eea
or (set $z=s\psi \in \{\eta\}$, $|z|<1$)
\bea
&&(F_{a, \e} f)(\xi)= \intl_{\bbb^{n+1} \cap \{\eta\}} (1-|z|^2)^{(k-2)/2} dz  \nonumber\\
&&\times
\intl_{\sn \cap \eta^\perp} \tilde f \left(\left[\begin{array} {c}
 \sqrt{1-|z|^2} \vp  \\ z\ \end{array} \right] \right )\, \om_\e
\left(\frac{s_{a^*} ( \rho^{1/2} z -t_0)}{1-a^* \cdot  (\sqrt{1-|z|^2}\, \vp+z)}\right )\, d\vp.\nonumber \eea
We set
\[
 \tilde f_\vp (z)=\tilde f \left(\left[\begin{array} {c}
 \sqrt{1-|z|^2} \vp  \\ z\ \end{array} \right] \right ), \quad \lam=s_{a^*} \rho^{1/2}, \quad
 \del= s_{a^*} t_0,\]
 \[ h_\vp (z)=1-a^* \cdot  (\sqrt{1-|z|^2}\, \vp+z),\]
to get
\[
F_{a, \e} f)(\xi)= \intl_{\bbb^{n+1} \cap \{\eta\}} (1-|z|^2)^{(k-2)/2} dz\intl_{\sn \cap \eta^\perp}\tilde f_\vp (z)\,
\om_\e
\left(\frac{\lam z -\del}{ h_\vp (z)}\right )\, d\vp.\]
Denote  $t=\lam ^{-1}  \del$ and change variable
\be\label {kana4}  r\equiv r_\vp(z)=\frac{\lam z -\del}{ h_\vp (z)},\ee
so that $r=0$ if and only if $z=t$. One can
 write (\ref{kana4}) as
\[
\Phi (r,z)\equiv\lam z -\del-r h_\vp (z) =0.\]
 Because the matrix
 $(\partial \Phi/\partial z)(0,0)=\lam$    is invertible, there exists an inverse function  $z=z_\vp(r)$, which satisfies $z_\vp(0)=t$ and is differentiable in a  small neighborhood of $r=0$.
  Hence, for sufficiently small $\e>0$,
\[
(F_{a, \e} f)(\xi)\!=\! \intl_{|r|<\e} \!\!  \om_\e (r) \,dr\!
\!\intl_{\sn \cap \eta^\perp}\!\!\!\tilde f_\vp (z_\vp(r))\,(1\!-\!|z_\vp(r)|^2)^{(k-2)/2} \, |\det (r'_\vp(r))| d\vp.\]
 Passing to the limit and noting that
 \[ |\det (z'_\vp(0))|=\frac{[h_\vp (t)]^{n+1-k}}{\det (\lam)}=
 \frac{[h_\vp (t)]^{n+1-k}}{s_{a^*}^{n+1-k}\, [\det (\rho)]^{1/2}},\]
  we obtain
\bea
\lim\limits_{\e\to 0} (F_{a, \e} f)(\xi)&=& \frac{(1\!-\!|t|^2)^{(k-2)/2}}{s_{a^*}^{n+1-k} \,[\det (\rho)]^{1/2}} \intl_{\sn \cap \eta^\perp}\! \!\tilde f\left(\left[\begin{array} {c}
 \sqrt{1\!-\!|t|^2} \vp  \\ t\ \end{array} \right] \right )\nonumber\\
&\times& \,
  [1-a^* \cdot  (\sqrt{1-|t|^2}\, \vp+t)]^{n+1-k} d\vp,\nonumber\eea
or
\bea
\lim\limits_{\e\to 0} (F_{a, \e} f)(\xi)&=& \frac{(1\!-\!|t|^2)^{(k-2)/2}}{[\det (\rho)]^{1/2}}
\intl_{\sn \cap \eta^\perp}\! \! (f\circ \vp_{a^*})(\sqrt{1\!-\!|t|^2} \vp + t) \nonumber\\
 &\times& \left (\frac{s_{a^*}}{1-a^* \cdot  (\sqrt{1-|t|^2}\, \vp+t)} \right )^{k-1} d\vp, \nonumber\eea
 Using the notation in (\ref{vt1de}) and scaling the measure $d\vp$, we can write the last formula  as \[ \lim\limits_{\e\to 0} (F_{a, \e} f)(\xi)= \frac{1}{h^{1/2}} \intl_{\eta' y=t} (M_{a^*}f)(y)\, d\sig (y),\]
where $h= (1-|t|^2)\,\det (\rho)$,   $\;t =-s_{a^*} \rho^{-1/2}\xi'a$, $\;\rho= (\mathrm {Q}_{a^*} \xi)'(\mathrm {Q}_{a^*} \xi)$.

Let us simplify the expression for $h$.
 We have
\[
1-|t|^2=1- s_{a^*}^2 \, (a'\xi \rho^{-1/2} )(\rho^{-1/2}\xi'a).\]
This expression can be transformed by making use  of
  the known fact  from Algebra (see, e.g., \cite [Theorem A3.5]{Mu}). Specifically,
  if $A$ and $B$ are  $p\times q$ and $q \times p$ matrices, respectively, then
\be\label{aooi4s}
\det (\mathrm {I}_{p} +AB) =\det (\mathrm {I}_{q} +BA).\ee
By this formula,
 \[1-|t|^2=\det  \left (\mathrm {I}_{n+1-k}-s_{a^*}^2 \, (\rho^{-1/2}\xi'a)(a'\xi \rho^{-1/2})\right ),\]
and therefore
 \[1-|t|^2=\det \left (\mathrm {I}_{n+1-k}-|a|^2 s_{a^*}^2 \, \rho^{-1/2}\xi'\mathrm {P}_a \xi  \rho^{-1/2}\right ),\]
where $\mathrm {P}_a$ stands for the orthogonal projection map onto the direction of the vector $a$.
Thus,
\[
h =\det \left (\rho-|a|^2 s_{a^*}^2 \, \xi'\mathrm {P}_a \xi\right ).\]
Because $\rho= (\mathrm {Q}_{a^*} \xi)'(\mathrm {Q}_{a^*} \xi)=\xi'\mathrm {Q}_{a} \xi$, we continue:
\bea
h &=& \det \left(\xi'\mathrm {Q}_a \xi - (|a|^2-1)\, \xi'\mathrm {P}_a \xi\right )=
\det \left(\mathrm {I}_{n+1-k}-|a|^2 \xi'\mathrm {P}_a \xi\right )\nonumber\\
 &=&\det \left(\mathrm {I}_{n+1-k}-\xi' a  a' \xi \right),\nonumber\eea
and (\ref{aooi4s}) gives
\[ h =1-(a' \xi)(\xi' a)=1-|\xi'a|^2.\]
It follows that
\[ \lim\limits_{\e\to 0} (F_{a, \e} f)(\xi)= (1-|\xi'a|^2)^{-1/2} \intl_{\eta' y=t} (M_{a^*}f)(y)\, d\sig (y),\]
Comparing this formula with  (\ref{kuyb1}), we obtain
\[(F_{a} f)(\t_\xi)= \intl_{\eta' y=t} (M_{a^*}f)(y)\, d\sig (y),\]
which coincides with (\ref{myywq}).

\section{ The Link Between Shifted Funk Transforms over Spherical Sections of Different Dimensions}

The link between integral-geometrical objects of different dimensions is a part of the  theory of these objects  that can be useful in many occurrences. It is intuitively clear that to integrate a function $f$ over an $\ell$-dimensional section, say $\a$, it suffices first to  integrate this function over each  section $\b$ of lower dimension $k<\ell$ and then inegrate the result over all $\b$ in $\a$. Specifically,
\be\label{fafaaa}  \intl_{x\in \a} f (x)\, d_a x= \intl_{\b \subset \a}  d\b \intl_{x\in \b} f (x)\, d_\b x, \ee
where integration is performed with respect to the corresponding measures.

This procedure is well known for diverse Radon-like transforms, including $F_a$ with $a=0$. Below we establish such connection for  transforms $F_af$  with arbitrary center $a$ in $\bbr^{n+1}$.

We slightly change our notation and write  $F_af$ as
 \be\label{fdl} (F_k f)(\xi; a)=\intl_{\{x \in \sn: \,\xi'(x -a)=0\}} \!\!\!f(x)\, d\sig (x), \qquad \xi \in \stnk.\ee
 As before, it is assumed that $|\xi'a|<1$, because otherwise, the integral  is identically zero.

Some more notations are in order.
In the sequel $\bbr^{n+1}=\bbr^{k} \times \bbr^{n-k+1}$,
\[ \bbr^{k} =\bbr e_1 \oplus \ldots \oplus \bbr e_{k},\qquad \bbr^{n-k+1} = \bbr e_{k+1 }\oplus \ldots \oplus \bbr e_{n+1},\]
  $\bbs^{k-1}$ is the unit sphere in  $\bbr^{k}$,
\[
\xi_0= \left[\begin{array} {c}
0 \\ I_{n+1-k} \end{array} \right]\in \stnk.\]
 Given $\xi \in \stnk$, we denote by $r_\xi$ an
 arbitrary rotation satisfying $ r_\xi  \xi_0=\xi$.
Then
 \be\label{fdl2}
 (F_k f)(\xi; a)=(1\!-\!|\xi'a|^2)^{(k-1)/2} \intl_{\bbs^{k-1}} f \left(r_\xi\left[\begin{array} {c}
\vp \, \sqrt{1\!-\!|\xi'a|^2} \\ \xi'a\end{array} \right] \right )d\vp. \ee
The corresponding normalized transform has the form
 \be\label{fdl3}
(\dot{F}_k f)(\xi; a)=\intl_{\bbs^{k-1}} f \left(r_\xi\left[\begin{array} {c}
\vp \, \sqrt{1\!-\!|\xi'a|^2} \\ \xi'a\end{array} \right] \right )d_*\vp,\ee
where $d_*\vp$ stands for the probability measure on $\bbs^{k-1}$.

Fix any integer  $\ell$, so that $k<\ell \le n$. Every frame  $\eta \in {\rm St} (n+1, n+1-\ell)$ extends to a ``bigger'' frame $\xi \in \stnk$, so that $\xi= [\tilde \eta, \eta]$, $\tilde \eta \in {\rm St} (\eta^\perp, \ell -k)$, where ${\rm St} (\eta^\perp, \ell -k)$ is the Stiefel manifold of  all $(\ell -k)$-frames in the $\ell$-dimensional subspace $\eta^\perp$. Here the notation $[\tilde \eta, \eta]$ is used for the
$(n+1)\times (n+1-k)$ matrix composed by the columns of $\tilde \eta$ and $\eta$.

\begin{theorem} \label{mgvdr} Let $f\in L^1(\sn)$, $a\in\bbr^{n+1}$,  $1<k<\ell \le n$. Then
 \be\label{fdl4}
(\dot{F}_\ell f)(\eta; a)= \intl_{ {\rm St} (\eta^\perp, \ell -k)} (\dot{F}_k f)([\tilde \eta, \eta]; a) \, d\tilde \eta.\ee
\end{theorem}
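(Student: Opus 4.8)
The plan is to start from the explicit parametrization \eqref{fdl3} and rewrite the right-hand side of \eqref{fdl4} as a single integral over the product manifold ${\rm St}(\eta^\perp,\ell-k)\times\bbs^{k-1}$, and then to recognize this integral as the one defining $(\dot F_\ell f)(\eta;a)$. First I would reduce $\eta$ to a standard position. Both sides are $O(n+1)$-equivariant: replacing $\eta$ by $g\eta$ with $g\in O(n+1)$ and choosing $r_{g\eta}=g\,r_\eta$ turns $(\dot F_\ell f)(\eta;a)$ into $(\dot F_\ell(f\circ g))(\eta;g'a)$, while the substitution $\tilde\eta\mapsto g\tilde\eta$ preserves the invariant measure $d\tilde\eta$ and carries $[\tilde\eta,\eta]$ to $g[\tilde\eta,\eta]$. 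Hence I may assume $\eta=\eta_0$ is the frame spanned by the last $n+1-\ell$ coordinate vectors, so that $\eta^\perp=\bbr^\ell$, $r_{\eta_0}$ is the identity, and the whole statement becomes an identity on the single spherical section $S_\ell=\sn\cap\{x:\,x_{\ell+1}=a_{\ell+1},\dots,x_{n+1}=a_{n+1}\}$, a sphere of radius $\sqrt{1-|\eta_0'a|^2}$ lying in the coordinate plane $\bbr^\ell$.

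The geometric content is exactly \eqref{fafaaa}: the $(\ell-1)$-sphere $S_\ell$ is foliated by the $(k-1)$-subspheres $S_k=\sn\cap\{x:[\tilde\eta,\eta_0]'(x-a)=0\}$ cut out by the $k$-planes $\tau_k\subset\tau_\ell$ through $a$, indexed by the frame $\tilde\eta\in{\rm St}(\eta_0^\perp,\ell-k)$. To make this precise I would pass to bispherical coordinates on $S_\ell$ adapted to the orthogonal splitting of $\bbr^\ell$ into the span $\{\tilde\eta\}$ and its complement, in the same spirit as the coordinates $y=[\vp\sin\th;\,\psi\cos\th]$ used in Section 6. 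A generic point of $S_\ell$ is then described by the frame $\tilde\eta$, a point $\vp\in\bbs^{k-1}$ running over the subsphere $S_k$, and the angular data recording which subsphere one sits on; the measure $d_*\psi$ on $\bbs^{\ell-1}$ should disintegrate into $d\tilde\eta$, $d_*\vp$, and a Jacobian factor. The content of the theorem is that this Jacobian reduces to exactly the radius ratio between $\sqrt{1-|[\tilde\eta,\eta_0]'a|^2}$ and $\sqrt{1-|\eta_0'a|^2}$, which by \eqref{fdl2} are precisely the factors relating $\dot F_k$ to $\dot F_\ell$. Performing the $\vp$-integration first (yielding $\dot F_k f([\tilde\eta,\eta_0];a)$) and the $\tilde\eta$-integration second should then recover \eqref{fdl4}.

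The hard part is this last measure computation, and in particular confirming the correct weight in $d\tilde\eta$. The delicate point is that, since the center $a$ need not coincide with the center of $S_\ell$, the subspheres $S_k$ are in general off-center inside $S_\ell$; consequently the disintegration of $d_*\psi$ over the foliation is not obviously the plain invariant measure on the Stiefel manifold, and one must verify that the radius factors supplied by \eqref{fdl2} exactly absorb this off-centeredness rather than leaving a residual $a$-dependent density. I would therefore compute the Jacobian of the bispherical change of variables explicitly, determine the induced density, and check that it matches the normalization built into the dotted transforms. Useful consistency checks at each stage are to test the proposed disintegration against $f\equiv1$ and against first-degree spherical harmonics, and to exploit the residual $O(\ell)$-symmetry that survives after $\eta_0$ and $a$ are fixed; these both pin down the normalization of $d\tilde\eta$ and confirm that no extraneous factor intervenes.
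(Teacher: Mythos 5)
There is a genuine gap. Your reduction to standard position $\eta=\eta_0$ via $O(n+1)$-equivariance is exactly the paper's first step, and you correctly locate the heart of the matter: because $a$ is in general off-center relative to the section $S_\ell=\sn\cap\{x:\eta_0'(x-a)=0\}$, one must show that averaging $d_*\vp$ over the off-center subspheres against the probability measure $d\tilde\eta$ reproduces the uniform probability measure on $S_\ell$ with no residual $a$-dependent density. But you then stop precisely there: ``I would compute the Jacobian \dots and check that it matches'' is a plan, not an argument, and the step you defer is the entire nontrivial content of (\ref{fdl4}). Two further points would need repair even to carry out your plan. First, the family of subspheres $S_k(\tilde\eta)$ is not a foliation of $S_\ell$ — every point of $S_\ell$ lies on a positive-dimensional family of them — so there is no bispherical coordinate system on $S_\ell$ in which ``which subsphere you sit on'' is part of the coordinates; the correct framework is a Fubini/coarea computation on the incidence set $\{(\tilde\eta,x):x\in S_k(\tilde\eta)\}$, i.e., an average of averages, which changes what Jacobian you would actually have to compute. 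Second, your proposed consistency checks are too weak to pin down the answer: $f\equiv 1$ only confirms total mass (automatic, since $d_*\vp$ and $d\tilde\eta$ are probability measures), and degree-one harmonics cannot exclude an $a$-dependent density that integrates them correctly.

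For comparison, the paper sidesteps the Jacobian entirely by a group-averaging argument. Both sides are written as Haar integrals: the left side as $\int_{O(\ell)}f(\tilde\alpha\,[\,e_1\sqrt{1-|\eta_0'a|^2}\,;\,\eta_0'a\,])\,d\alpha$, and the right side as an iterated integral over $O(\ell)$ (parametrizing $\tilde\eta=\tilde\alpha\zeta_0$) and $O(k)$ (parametrizing $\vp=\omega e_1$). The substitution $\gamma=\alpha\omega$ and right-invariance of Haar measure collapse the double integral into a single integral over $O(\ell)$, and the decisive observation is that the last $n+1-\ell$ components of the resulting argument equal $\eta_0'a$ \emph{independently of} $\gamma$, so the remaining rotation can be absorbed into $\gamma$. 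The off-centeredness that you flag as the delicate point never enters, because the invariance of Haar measure does the bookkeeping that your explicit disintegration would have to perform by hand. Your route could in principle be completed, but as written the proof of the key identity is missing.
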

\begin{proof} We denote the left-hand side of (\ref{fdl4}) by $(Af)(\eta, a)$ and the right-hand side by
 $(Bf)(\eta, a)$. Setting $f_\gam =f\circ \gam$, $\gam \in O(n+1)$, we have
 \[(Af)(\gam \eta, \gam a)=  (Af_\gam)(\eta, a), \qquad
 (Bf)(\gam \eta, \gam a)=  (Bf_\gam)(\eta, a). \]
 Hence it suffices to prove (\ref{fdl4}) for the case when $\eta$
 is the coordinate frame of the subspace
\[ \bbr^{n-\ell+1} = \bbr e_{\ell+1 }\oplus \ldots \oplus \bbr e_{n+1},\]
that is, $\eta=\eta_0$, where
  \be\label{fdl5}
 \eta_0= \left[\begin{array} {c}
0\\ \mathrm{I}_{n+1-\ell} \end{array} \right]      \in {\rm St} (n+1, n+1-\ell).\ee
 In this case we can write $A\equiv (Af)(\eta_0, a)$ as
\be\label{fdl6}
A=\intl_{O(\ell)} f \left(\tilde \a
\left[\begin{array} {c}
e_1 \, \sqrt{1\!-\!|\eta_0'a|^2} \\ \eta_0'a\end{array} \right] \right )d\a, \quad \tilde \a =\left[\begin{array} {cc}
\a & 0  \\ 0 &  \mathrm{I}_{n+1-\ell} \end{array} \right].\ee
Here $e_1$ is interpreted as the first coordinate unit vector in
\[
\bbr^{\ell} =\bbr e_1 \oplus \ldots \oplus \bbr e_{\ell}.\]

Similarly, for $B\equiv (Bf)(\eta_0, a)$ we have
\[
B=\intl_{ {\rm St} (\ell, \ell -k)} (\dot{F}_k f)([\tilde \eta, \eta_0]; a) \, d\tilde \eta=\intl_{O(\ell)} (\dot{F}_k f)([\tilde \a\zeta_0, \eta_0]; a) \, d\a,\]
where  ${\rm St} (\ell, \ell -k)$ is the Stiefel manifold of orthonormal $(\ell-k)$-frames in $\bbr^{\ell}$,
\be\label{fdl7}
 \zeta_0= \left[\begin{array} {c}
0\\ \mathrm{I}_{\ell -k} \\ 0\end{array} \right]      \in {\rm St} (n+1, \ell -k).\ee

Let us transform $ (\dot{F}_k f)([\tilde \a\zeta_0, \eta_0]; a)$ using (\ref{fdl3}) with $\xi=[\tilde\a\zeta_0, \eta_0]$. In this case,
$r_\xi$ takes $\xi_0$ to $[\tilde\a\zeta_0, \eta_0]=\tilde\a [\zeta_0, \eta_0]= \tilde\a \xi_0$. Hence $\xi'a=(\tilde \a \xi_0)' a$, and we can write
\[
 (\dot{F}_k f)([\a\zeta_0, \eta_0]; a)=\intl_{\bbs^{k-1}} f(\tilde \a v_\psi )\,d_*\psi,  \quad
v_\psi=\left[\begin{array} {c}
\psi \, \sqrt{1\!-\!|(\tilde \a  \xi_0)' a|^2} \\(\tilde \a \xi_0)' a \end{array} \right].
 \]
Setting
\[\psi= \om e_1, \qquad \om \in O(k), \qquad \tilde \om= \left[\begin{array} {cc}
\om & 0  \\ 0 &  \mathrm{I}_{n+1-k}\end{array} \right],\]
and noting that $\xi_0= \tilde \om\xi_0$, we obtain
\[
v_\psi=\tilde \om v_0, \qquad v_0=
\left[\begin{array} {c}
e_1 \, \sqrt{1\!-\!|(\tilde \a \tilde \om \xi_0)' a|^2} \\(\tilde \a \tilde \om \xi_0)' a \end{array} \right], \]
and therefore
\be\label{fdl8}
B=\intl_{O(\ell)} d\a \intl_{O(k)}   f\left (\tilde \a \tilde \om v_0 \right ) \, d\om.\ee
Now we change the order of integration and set
\[ \gam =\a \om \in O(\ell), \qquad \tilde \gam= \left[\begin{array} {cc}
\gam & 0  \\ 0 &  \mathrm{I}_{n+1-\ell} \end{array} \right].\]
This gives
\be\label{fdl9x}
B=\intl_{O(\ell)}   f (\tilde \gam v_0) \, d\gam, \quad  v_0=
\left[\begin{array} {c}
e_1 \, \sqrt{1\!-\!|(\tilde \gam \xi_0)' a|^2} \\ (\tilde \gam \xi_0)' a \end{array} \right].\ee
Let
\be\label{fdl91}
(\tilde \gam \xi_0)' a=\left[\begin{array} {c}
b_1 \\ b_2 \end{array} \right],\ee
where $b_1$ and $b_2$ are  the $(\ell -k)$-vector and the $(n+1-\ell)$-vector, respectively.
Then the unit vector  $v_0$ can be represented in the form
\[
v_0=\left[\begin{array} {c}
\th\, \sqrt{1\!-\!|b_2|^2} \\ b_2 \end{array} \right]= \left[\begin{array} {cc}
\b & 0  \\ 0 &  \mathrm{I}_{n+1-\ell} \end{array} \right] \left[\begin{array} {c}
 e_1\, \sqrt{1\!-\!|b_2|^2} \\ b_2 \end{array} \right]
\]
with some $\th \in \bbs^{\ell-1}$ and $\b\in O(\ell)$.

To find a simple expression for $b_2$, we set $\tilde \eta_0 =\left[\begin{array} {c}
\tilde 0\\ \mathrm{I}_{n+1-\ell} \end{array} \right]$, where $\tilde 0$ is the zero matrix having
$\ell -k$ rows and $n+1-\ell$  columns. Then,  by (\ref{fdl91}),
\bea
b_2&=&\tilde \eta'_0 (\tilde \gam \xi_0)' a=(\xi_0 \tilde \eta_0)'\tilde \gam'a\nonumber\\
&=&
 \left (\left[\begin{array} {cc}
0 & 0  \\ \mathrm{I}_{\ell -k}  &  0 \\ 0 &  \mathrm{I}_{n+1-\ell} \end{array} \right]
\left[\begin{array} {c}
\tilde 0\\ \mathrm{I}_{n+1-\ell} \end{array} \right]\right )' \tilde \gam'a =\eta'_0\tilde \gam'a\nonumber\\
&=& (\tilde \gam \eta_0)'a=\eta'_0a. \nonumber\eea
Thus,

\[
v_0= \left[\begin{array} {cc}
\b & 0  \\ 0 &  \mathrm{I}_{n+1-\ell} \end{array} \right] \left[\begin{array} {c}
e_1\, \sqrt{1\!-\!|\eta'_0a|^2} \\\eta'_0a \end{array} \right].
\]
Substituting this expression in (\ref{fdl9x}) and changing variable $\gam\b =\del$, we obtain
\[
B=\intl_{O(\ell)}   f\left ( \tilde \del \left[\begin{array} {c}
 e_1\, \sqrt{1\!-\!|\eta'_0a|^2} \\ \eta'_0a \end{array} \right] \right ) \, d\del, \quad \tilde \del=\left[\begin{array} {cc}
\del & 0  \\ 0 &  \mathrm{I}_{n+1-\ell} \end{array} \right].\]
 The latter coincides with (\ref{fdl6}) up to notation.
\end{proof}

\begin {remark} An interested reader might be  advised to obtain an analogue of (\ref{fafaaa})  for the parallel slice transform $\Pi_a$.
\end{remark}

\bibliographystyle{amsplain}

\end{document}